\newtheorem{theorem}{Theorem}
\newtheorem{lemma}{Lemma}
\newtheorem{corollary}{Corollary}
\newtheorem{remark}{Remark}
\newtheorem{definition}{Definition}
\let\inf\undef
\DeclareMathOperator*{\inf}{\vphantom{p}inf}
\let\sup\undef
\DeclareMathOperator*{\sup}{\vphantom{p}sup}
\newcommand \red[1]{{\color[rgb]{0.80,0.00,0.00}#1}}
\newcommand{\mrm}[1]{\mathrm{#1}}
\newcommand{\K}{\mathcal{K}}
\newcommand{\B}{\mathcal{B}}
\newcommand{\oracle}{\boldsymbol{\mathrm{O}}}
\newcommand{\net}{\boldsymbol{\mathcal{N}}}
\newcommand{\E}{\mathbb{E}}
\newcommand{\argmin}[1]{\underset{#1}{\mrm{argmin}} \ }
\newcommand{\reals}{\mathbb{R}}
\newcommand{\En}{\mathbb{E}}  % no brackets or braces
\newcommand\Y{\mathcal{Y}}
\newcommand\F{\mathcal{F}}
\newcommand*\samethanks[1][\value{footnote}]{\footnotemark[#1]}
\newcommand{\uu}[1]{\underline{#1}}
\newcommand{\bb}[1]{\bar{#1}}
\newcommand{\HH}{g}
\begin{document}
\title{Escaping the Local Minima via Simulated Annealing: Optimization of Approximately Convex Functions}
\date{}
\author{Alexandre Belloni  \thanks{The Fuqua School of Business, Duke University} \and Tengyuan Liang \thanks{Department of Statistics, The Wharton School, University of Pennsylvania} \and
Hariharan Narayanan \thanks{Department of Statistics and Department of Mathematics, University of Washington} \and Alexander Rakhlin \samethanks[2]}
\maketitle

\begin{abstract} We consider the problem of optimizing an approximately convex function over a bounded convex set in $\mathbb{R}^n$  using only function evaluations. The problem is reduced to sampling from an \emph{approximately} log-concave distribution using the Hit-and-Run method, which is shown to have the same $\mathcal{O}^*$ complexity as sampling from log-concave distributions. In addition to extend the analysis for log-concave distributions to approximate log-concave distributions, the implementation of the 1-dimensional sampler of the Hit-and-Run walk requires new methods and analysis. The algorithm then is based on simulated annealing which does not relies on first order conditions which makes it essentially immune to local minima.

We then apply the method to different motivating problems. In the context of zeroth order stochastic convex optimization, the proposed method produces an $\epsilon$-minimizer after $\mathcal{O}^*(n^{7.5}\epsilon^{-2})$ noisy function evaluations  by inducing a $\mathcal{O}(\epsilon/n)$-approximately log concave distribution. We also consider in detail the case when the ``amount of non-convexity'' decays towards the optimum of the function. Other applications of the method discussed in this work include private computation of empirical risk minimizers, two-stage stochastic programming, and approximate dynamic programming for online learning.
\end{abstract}

%\tableofcontents
% \footnotetext[1]{
% The Fuqua School of Business, Duke University}
% \footnotetext[2]{
% Department of Statistics, The Wharton School, University of Pennsylvania}
% \footnotetext[3]{
% Department of Statistics and Department of Mathematics, University of Washington}

%%%%%%%%%%%%%%%%%%%%%%%%%%%%%%%%%%%%%%%%%%%%%%%%%%%%%%%
\section{Introduction and Problem Formulation}

Let $\K\subset \reals^n$ be a convex set, and let $F:\reals^n\to\reals$ be an  approximately convex function over $\K$ in the sense that
\begin{align}
	\label{eq:linfty_approx}
	\sup_{x\in\K}|F(x)-f(x)|\leq \epsilon/n
\end{align}
for some convex function $f:\reals^n\to\reals$ and $\epsilon>0$. In particular, $F$ may be discontinuous. We seek to find $x\in\K$ such that
\begin{align}
	\label{eq:opt_objective}
F(x) - \min_{y \in \K} F(y) \leq \epsilon
\end{align}
using only function evaluations of $F$. This paper presents a randomized method based on simulated annealing that satisfies \eqref{eq:opt_objective} in expectation (or with high probability). Moreover, the number of required function evaluations of $F$ is at most $\mathcal{O}^*(n^{4.5})$ (see Corollary~\ref{cor:4.5}), where $\mathcal{O}^*$ hides polylogarithmic factors in $n$ and $\epsilon^{-1}$. Our method requires only a membership oracle for the set $\K$. In Section \ref{sec:de-fluc}, we consider the case when the amount of non-convexity in \eqref{eq:linfty_approx} can be much larger than $\epsilon/n$ for points away from the optimum.

In the oracle model of computation, access to function values at queried points is referred to as the zeroth-order information.
Exact function evaluation of $F$ may be equivalently viewed as approximate function evaluation of the convex function $f$, with the oracle returning a value
\begin{align}
	\label{eq:approx_oracle}
	F(x) \in [f(x)-\epsilon/n, f(x)+\epsilon/n].
\end{align}

A closely related problem is that of convex optimization with a \emph{stochastic} zeroth order oracle. Here, the oracle returns a noisy function value $f(x)+\eta$. If $\eta$ is zero-mean and subgaussian, the function values can be averaged to emulate, with high probability, the approximate oracle \eqref{eq:approx_oracle}. The randomized method we propose has an $\mathcal{O}^*(n^{7.5}\epsilon^{-2})$ oracle complexity for convex stochastic zeroth order optimization, which, to the best of our knowledge, is the best that is known for this problem. We refer to Section~\ref{sec:stoch_zeroth} for more details.

The motivation for studying zeroth-order optimization is plentiful, and we refer the reader to \cite{conn2009introduction} for a discussion of problems where derivative-free methods are essential. In Section~\ref{sec:apps} we sketch three areas where the algorithm of this paper can be readily applied: private computation with distributed data, two-stage stochastic programming, and online learning algorithms.

 % derivative-free techniques discussed in \cite{conn2009introduction}, e.g., the trust region methods, require more knowledge about the structure of the function in order to build a good local model.

\section{Prior Work}

The present paper rests firmly on the long string of work by Kannan, Lov\'asz, Vempala, and others \citep{lovasz1993random,kannan1997random,kalai2006simulated,lovasz2006fast,lovasz2006hit,lovasz2007geometry}. In particular, we invoke the key lower bound on conductance of Hit-and-Run from \cite{lovasz2006fast} and use the simulated annealing technique of \cite{kalai2006simulated}. Our analysis extends Hit-and-Run to approximately log-concave distributions which required new theoretical results and implementation adjustments. In particular, we propose a unidimensional sampling scheme that mixes fast to a truncated approximately log-concave distribution on the line.

Sampling from $\beta$-log-concave distributions was already studied in the early work of \cite{applegate1991sampling} with a discrete random walk based on a discretization of the space.  In the case of non-smooth densities and unrestricted support, sampling from approximate log-concave distributions has also been studied in \cite{belloni2009computational} where the hidden convex function $f$ is quadratic. This additional structure was motivated by the central limit theorem in statistical applications and leads to faster mixing rates. Both works used ball walk-like strategies. Neither work considered random walks that allow for long steps like Hit-and-Run.

The present work was motivated by the question of information-based complexity of zeroth-order stochastic optimization. The paper of \cite{AgaFosHsuKakRak13siam} studies a somewhat harder problem of regret minimization with zeroth-order feedback. Their method is based on the pyramid construction of \cite{NemYud83} and requires $\mathcal{O}(n^{33}\epsilon^{-2})$ noisy function evaluations to achieve a regret (and, hence, an optimization guarantee) of $\epsilon$. The method of \cite{liang2014zeroth} improved the dependence on the dimension to $\mathcal{O}^*(n^{14})$ using a Ball Walk on the epigraph of the function in the spirit of \cite{bertsimas2004solving}.  The present paper further reduces this dependence to $\mathcal{O}^*(n^{7.5})$ and still achieves the optimal $\epsilon^{-2}$ dependence on the accuracy. The best known lower bound for the problem is $\Omega(n^2\epsilon^{-2})$ (see \cite{Shamir12}).

Other relevant work includes the recent paper of \cite{dyer2013simple} where the authors proposed a simple random walk method that requires only  approximate function evaluations. As the authors mention, their algorithm only works for smooth functions  and sets $\K$ with smooth boundaries --- assumptions that we would like to avoid. Furthermore, the effective dependence of \cite{dyer2013simple} on accuracy is worse than $\epsilon^{-2}$.

%%%%%%%%%%%%%%%%%%%%%%%%%%%%%%%%%%%%%%%%%%%%%%%%%%%%%%%
\section{Preliminaries}

Throughout the paper, the functions $F$ and $f$ satisfy \eqref{eq:linfty_approx} and $f$ is convex. The Lipschitz constant of $f$ with respect to $\ell_\infty$ norm will be denoted by $L$, defined as the smallest number such that $|f(x) - f(y)| \leq L \|x - y \|_{\infty}$ for $x, y \in \K$. Assume the convex body $\K\subseteq \reals^n$ to be well-rounded in the sense that there exist $r,R>0$ such that $\B_2^n(r) \subseteq \K \subseteq  \B_2^n(R)$ and $R/r \leq \mathcal{O}(\sqrt{n})$.\footnote{This condition can be relaxed by applying a pencil construction as in \cite{lovasz2007geometry}.} For a non-negative function $g$, denote by $\pi_g$ the normalized probability measure induced by $g$ and supported on $\K$.

\begin{definition}
	A function $h:\K\to\reals_+$ is log-concave if
	$$h(\alpha x + (1-\alpha)y) \geq h(x)^\alpha h(y)^{1-\alpha}$$
	for all $x,y\in \K$ and $\alpha\in [0,1]$. A function is called $\beta$-log-concave for some $\beta\geq 0$ if
	$$h(\alpha x + (1-\alpha)y) \geq e^{-\beta} h(x)^\alpha h(y)^{1-\alpha}$$
	for all $x,y\in \K$ and $\alpha\in [0,1]$.
\end{definition}

\begin{definition}
A function $g:\K\to\reals_+$ is $\xi$-approximately  log-concave if there is a log-concave function $h:\K\to\reals_+$ such that
 $$\sup_{x\in \K} |\log h(x) - \log g(x)| \leq \xi.$$
\end{definition}

\begin{lemma}
	\label{lem:linfty_beta_concave}
	If the function $g$ is $\beta/2$-approximately  log-concave, then $g$ is $\beta$-log-concave.
\end{lemma}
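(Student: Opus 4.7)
The plan is to unwind the two definitions and chain three one-sided inequalities, picking up a factor of $e^{-\beta/2}$ at each of two places and one more factor of $e^{-\beta/2}$ from the log-concavity of the dominating $h$.

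First I would introduce the log-concave witness $h$ guaranteed by the hypothesis, so that $|\log h(x) - \log g(x)| \leq \beta/2$ for every $x \in \K$. This gives in particular the pointwise two-sided bounds $\log g(z) \geq \log h(z) - \beta/2$ and $\log h(z) \geq \log g(z) - \beta/2$ that I will apply at the three points $x$, $y$, and $\alpha x + (1-\alpha) y$.

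Next I would take an arbitrary convex combination $z = \alpha x + (1-\alpha) y$ and estimate $\log g(z)$ from below. Applying the $\beta/2$-approximation at $z$ gives $\log g(z) \geq \log h(z) - \beta/2$; then using log-concavity of $h$ yields $\log h(z) \geq \alpha \log h(x) + (1-\alpha)\log h(y)$; finally applying the approximation at $x$ and $y$ in the other direction gives $\log h(x) \geq \log g(x) - \beta/2$ and $\log h(y) \geq \log g(y) - \beta/2$. Summing the two weighted contributions produces an additional $-(\alpha + (1-\alpha))\beta/2 = -\beta/2$, which combines with the original $-\beta/2$ to yield
\begin{equation*}
\log g(\alpha x + (1-\alpha)y) \geq \alpha \log g(x) + (1-\alpha)\log g(y) - \beta,
\end{equation*}
i.e. $g(\alpha x + (1-\alpha)y) \geq e^{-\beta} g(x)^\alpha g(y)^{1-\alpha}$, which is precisely the $\beta$-log-concavity condition.

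There is essentially no obstacle here: the argument is a routine triangle-inequality-style bookkeeping exercise. The only thing to keep straight is the direction of each approximation inequality (lower bound on $\log g(z)$ at the midpoint, but lower bound on $\log h(x), \log h(y)$ at the endpoints) so that every error term contributes with the correct sign, and to observe that the weights $\alpha$ and $1-\alpha$ sum to one so the endpoint errors collapse to a single $\beta/2$ rather than depending on $\alpha$.
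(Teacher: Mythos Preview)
Your proof is correct and is essentially identical to the paper's own argument, just written additively in terms of $\log g$ and $\log h$ rather than multiplicatively. (Your opening sentence about ``one more factor of $e^{-\beta/2}$ from the log-concavity of $h$'' is slightly garbled---log-concavity of $h$ is exact and contributes no error---but your detailed derivation gets the bookkeeping right and matches the paper line for line.)
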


For one-dimensional functions, the above lemma can be reversed:
\begin{lemma}[\cite{belloni2009computational}, Lemma 9]
	\label{lem:1dsandwich}
If $g$ is a unidimensional $\beta$-log-concave function, then there exists a log-concave function $h$ such that
\begin{align*}
e^{-\beta} h(x) \leq g(x) \leq h(x) \ \ \mbox{for all} \ x \in \reals.
\end{align*}
\end{lemma}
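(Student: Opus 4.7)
I would prove this by taking logarithms and constructing the desired dominating log-concave function as the exponential of the convex envelope (biconjugate) of $-\log g$.

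The plan is to pass to $\phi := -\log g$ on $\reals$ (setting $\phi(x) = +\infty$ where $g(x) = 0$), since the $\beta$-log-concavity of $g$ is equivalent to the approximate-convexity statement
\begin{equation*}
\phi(\alpha x + (1-\alpha) y) \leq \alpha \phi(x) + (1-\alpha) \phi(y) + \beta \quad \text{for all } x,y\in\reals,\ \alpha\in[0,1].
\end{equation*}
I then let $\psi := \phi^{**}$ denote the largest convex minorant of $\phi$ (equivalently, the biconjugate). By definition $\psi \leq \phi$. The entire content of the lemma will reduce to the reverse bound $\phi \leq \psi + \beta$, which after exponentiating and setting $h := e^{-\psi}$ gives the log-concave $h$ with $e^{-\beta} h \leq g \leq h$.

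The key step, and the only place where one dimension is used, is the representation of the convex envelope at a point. Since the epigraph of $\phi$ lies in $\reals^{2}$, its convex hull can be described using convex combinations of at most two points that sit on the lower boundary. Concretely, for any $x \in \reals$ and any $\delta > 0$, there exist $x_1, x_2 \in \reals$ and $\alpha \in [0,1]$ with $x = \alpha x_1 + (1-\alpha) x_2$ and
\begin{equation*}
\alpha \phi(x_1) + (1-\alpha)\phi(x_2) \leq \psi(x) + \delta.
\end{equation*}
Applying the $\beta$-approximate-convexity inequality to this specific decomposition gives
\begin{equation*}
\phi(x) \leq \alpha\phi(x_1) + (1-\alpha)\phi(x_2) + \beta \leq \psi(x) + \beta + \delta,
\end{equation*}
and sending $\delta \downarrow 0$ yields $\phi(x) \leq \psi(x) + \beta$. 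Combined with $\psi \leq \phi$, exponentiating gives $e^{-\beta} h \leq g \leq h$, and $h = e^{-\psi}$ is log-concave because $\psi$ is convex.

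The main obstacle (and the reason the result is one-dimensional) is justifying the two-point representation of $\phi^{**}$; I would need to be slightly careful if $\phi$ can equal $+\infty$ or if the infimum is not attained, which is handled by the $\delta$-slack argument above and by noting that we may restrict to the (convex) support of $g$. If one tried to replicate the argument in $\reals^n$, Carathéodory would force convex combinations of up to $n+1$ points, and iterating the $\beta$-approximate-convexity bound $n$ times would inflate the additive slack from $\beta$ to something like $n\beta$ (or $\lceil \log_2(n+1)\rceil \beta$ if one bisects carefully), which is exactly what Lemma~\ref{lem:linfty_beta_concave} foreshadows and what prevents a lossless converse in general dimension.
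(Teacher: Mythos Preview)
The paper does not supply its own proof of this lemma; it is simply quoted from \cite{belloni2009computational}, so there is no in-paper argument to compare against.  Your proof via the convex envelope is correct: passing to $\phi=-\log g$, taking $\psi=\phi^{**}$, and using that in one dimension the lower convex envelope at any point is (up to $\delta$) realized by a two-point convex combination is exactly the right mechanism, and the $\delta$-slack handles non-attainment cleanly.  Your observation that the support $\{g>0\}$ is an interval (since $g(x),g(y)>0$ forces $g(\alpha x+(1-\alpha)y)\geq e^{-\beta}g(x)^{\alpha}g(y)^{1-\alpha}>0$) disposes of the $\phi=+\infty$ issue.  The closing remark about Carath\'eodory in $\reals^{n}$ forcing $n{+}1$ points and an additive loss of order $\log_2 n$ copies of $\beta$ is also accurate and matches the discussion in the paper's Remark following the lemma.
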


\begin{remark}[Gap Between $\beta$-Log-Concave Functions and $\xi$-Approximate Log-Concave Functions]
A consequence of Lemma \ref{lem:1dsandwich} is that $\beta$-log-concave functions are equivalent to $\beta$-approximately  log-concave functions when the domain is unidimensional. However, such equivalence no longer holds in higher dimensions.
In the case the domain is $\reals^n$, \cite{green1952approximately,cholewa1984remarks} established that $\beta$-log-concave functions are $\frac{\beta}{2}\log_2 (2n)$-approximately  log-concave. \cite{laczkovich1999local} showed that there are functions such that the factor that relates these approximations cannot be less than $\frac{1}{4}\log_2 (n/2)$. \qed
\end{remark}

We end this section with two useful lemmas that can be found in \cite{lovasz2007geometry}.
\begin{lemma}[\cite{lovasz2007geometry}, Lemma 5.19]
	\label{lem:volume}
	Let $h: \mathbb{R}^n\rightarrow \mathbb{R}$ be a log-concave function. Define $M_h := \max h$ and $L_h(t) = \{ x\in \mathbb{R}^n: h(x)\geq t \}$. Then for $0<s<t<M_h$
	\begin{align*}
	\frac{{\sf vol}(L_h(s))}{{\sf vol}(L_h(t))} \leq \left( \frac{\log(M_h/s)}{\log(M_h/t)}  \right)^n .
	\end{align*}
\end{lemma}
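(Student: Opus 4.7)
The plan is to exhibit a contraction that sends the larger super-level set into the smaller one and then compute the Jacobian.

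First I would reparametrize in terms of the convex function $\phi(x) := \log M_h - \log h(x) \geq 0$. The super-level sets then become sub-level sets of $\phi$: writing $a := \log(M_h/t)$ and $b := \log(M_h/s)$, we have $0 < a < b$ and
\begin{equation*}
L_h(t) = \{x \in \mathbb{R}^n : \phi(x) \leq a\}, \qquad L_h(s) = \{x \in \mathbb{R}^n : \phi(x) \leq b\}.
\end{equation*}
Both sets are convex and $L_h(t) \subseteq L_h(s)$.

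Next, assume for the moment that $\phi$ attains its minimum at some $x^{\star}$, i.e., $h(x^{\star}) = M_h$ so that $\phi(x^{\star}) = 0$. Consider the affine contraction
\begin{equation*}
T(y) := x^{\star} + \tfrac{a}{b}(y - x^{\star}).
\end{equation*}
For any $y \in L_h(s)$, convexity of $\phi$ together with $\phi(x^{\star}) = 0$ gives
\begin{equation*}
\phi(T(y)) \leq \bigl(1 - \tfrac{a}{b}\bigr)\phi(x^{\star}) + \tfrac{a}{b}\phi(y) \leq \tfrac{a}{b}\cdot b = a,
\end{equation*}
so $T(L_h(s)) \subseteq L_h(t)$. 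The Jacobian of $T$ is $(a/b)^n$, hence
\begin{equation*}
\mathrm{vol}(L_h(t)) \geq \mathrm{vol}(T(L_h(s))) = (a/b)^n\, \mathrm{vol}(L_h(s)),
\end{equation*}
which rearranges to exactly the claimed bound $\mathrm{vol}(L_h(s))/\mathrm{vol}(L_h(t)) \leq (b/a)^n$.

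The only delicate point is what to do when the supremum $M_h$ is not attained. I would handle this by picking any $M' < M_h$ with $M'$ arbitrarily close to $M_h$, choosing $x^{\star} \in L_h(M')$ (nonempty since $M' < M_h$), and running the same argument with $\alpha := (\log M' - \log t)/(\log M' - \log s)$ in place of $a/b$, using log-concavity in the multiplicative form $h(\alpha y + (1-\alpha)x^{\star}) \geq h(y)^{\alpha} h(x^{\star})^{1-\alpha} \geq s^{\alpha}(M')^{1-\alpha} \geq t$. Letting $M' \uparrow M_h$ sends $\alpha \to a/b$ and recovers the stated inequality. I expect this limiting step to be the main (mild) obstacle; the scaling/Jacobian core of the argument is essentially forced once one observes that sub-level sets of a convex function are convex and nested under contraction toward a minimizer.
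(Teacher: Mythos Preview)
The paper does not supply its own proof of this lemma; it is quoted verbatim as Lemma~5.19 of \cite{lovasz2007geometry} and used as a black box. So there is nothing in the paper to compare against.

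That said, your argument is correct and is essentially the standard proof (and indeed the one in \cite{lovasz2007geometry}): pass to the convex function $\phi=\log(M_h/h)$, contract the sub-level set $\{\phi\le b\}$ toward a (near-)minimizer by the factor $a/b$, and read off the volume ratio from the Jacobian. The only minor remark is that in the limiting step you should also note that $\mathrm{vol}(L_h(t))>0$ (true since $t<M_h$ and super-level sets of a log-concave function have nonempty interior once they contain more than one point, or simply because otherwise the inequality is vacuous), so dividing is legitimate; your handling of the non-attained supremum via $M'\uparrow M_h$ is otherwise fine.
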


\begin{lemma}[\cite{lovasz2007geometry}, Lemma 5.6(a)]
	\label{lem:tails}
Let $X$ be a random point drawn from a log-concave distribution $h: \mathbb{R} \rightarrow \mathbb{R}_+$ and let $M_h := \max_{x\in \mathbb{R}} h(x)$. Then for every $t \geq 0$
\begin{align*}
\mathbb{P}(h(X) \leq t) < \frac{t}{M_h}.
\end{align*}
\end{lemma}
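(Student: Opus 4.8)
The plan is to compare $h$ locally to a pure exponential and thereby reduce the claim to a single linear inequality in $p:=\mathbb{P}(h(X)\le t)$. Since $h$ is log-concave, $\log h$ is concave, so $h$ is unimodal; translate so that a mode sits at the origin and write $M:=M_h=h(0)$. If $t\ge M$ then $\{x:h(x)\le t\}$ has full measure and $t/M_h\ge 1$, so the bound is immediate; assume henceforth $t<M$. Because $\log h$ is concave with maximum at $0$, it is nonincreasing on $[0,\infty)$ and nondecreasing on $(-\infty,0]$, so $\{x:h(x)\le t\}=(-\infty,a]\cup[b,\infty)$ with $a\le 0\le b$, $h(b)=t$ on the right branch and $h(a)=t$ on the left (if $h$ stays above $t$ throughout its support on one side, that ray lies outside the support and contributes zero probability, so it may simply be dropped).

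The main step is a one-sided chord estimate. Put $\rho:=t/M\in(0,1)$ and let $E(x):=M\rho^{x/b}$ be the exponential that passes through the points $(0,\log M)$ and $(b,\log t)$ in logarithmic scale. Concavity of $\log h$ then forces $\log h\ge \log E$ on $[0,b]$ (the graph lies above the chord) and $\log h\le \log E$ on $[b,\infty)$ (the graph lies below the extrapolated chord). A short computation with the explicit exponential $E$ gives $\int_0^b E=\frac{b(M-t)}{\log(M/t)}$ and $\int_b^\infty E=\frac{bt}{\log(M/t)}$, hence
\[
\int_b^\infty h \ \le\ \int_b^\infty E \ =\ \frac{t}{M-t}\int_0^b E \ \le\ \frac{t}{M-t}\int_0^b h .
\]
The identical argument on the left branch yields $\int_{-\infty}^a h\le \frac{t}{M-t}\int_a^0 h$.

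Finally, $p=\int_{-\infty}^a h+\int_b^\infty h$, and adding the two one-sided bounds while using $\int_a^0 h+\int_0^b h=\int_a^b h=\mathbb{P}(h(X)>t)=1-p$ gives $p\le \frac{t}{M-t}(1-p)$, which rearranges to $pM\le t$, i.e.\ $p\le t/M_h$. The only genuinely delicate point is the strict inequality asserted in the statement: equality throughout the chord bounds forces $\log h$ to be affine on $[a,0]$ and on $[0,b]$, i.e.\ $h$ to coincide with a (possibly one-sided) exponential, and a one-sided exponential density actually attains $\mathbb{P}(h(X)\le t)=t/M_h$. So one proves $\le$ in full generality and upgrades to $<$ under any mild nondegeneracy of $h$ (for instance, $h$ not being a pure exponential tail on a half-line). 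The remaining bookkeeping — well-definedness of $a,b$ from continuity of $\log h$ on the interior of its support, and the harmless vanishing of empty rays — is routine.
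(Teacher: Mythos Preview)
The paper does not prove this lemma; it merely quotes it from \cite{lovasz2007geometry}, so there is no in-paper argument to compare against. Your chord-versus-tangent argument is essentially the classical proof of this fact (and is the one given in the cited source): bound $h$ above by the extrapolated exponential on each tail and below by the same exponential on the central interval, then solve the resulting linear inequality $p\le \frac{t}{M-t}(1-p)$ for $p$. The computations and the use of concavity are all correct.

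Your observation about the strict inequality is also on point: the one-sided exponential density $h(x)=\lambda e^{-\lambda x}\mathbf{1}_{x\ge 0}$ satisfies $\mathbb{P}(h(X)\le t)=t/M_h$ exactly, so the statement as written (with strict $<$) fails on this boundary case. What you actually prove is $\le$, which is all that is ever used downstream in the paper (Lemma~\ref{lem:1dsampler} only needs the nonstrict bound). So your proof is complete for the version of the lemma that matters here; the strict inequality is simply an overstatement inherited from the citation.
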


%%%%%%%%%%%%%%%%%%%%%%%%%%%%%%%%%%%%%%%%%%%%%%%%%%%%%%%%%
% Sampling Section
%%%%%%%%%%%%%%%%%%%%%%%%%%%%%%%%%%%%%%%%%%%%%%%%%%%%%%%%%
\section{Sampling from Approximate Log-Concave Distributions via Hit-and-Run}

In this section we analyze the Hit-and-Run procedure to simulate random variables from a distribution induced by an approximate log-concave function. The Hit-and-Run algorithm is as follows.

\vspace{1cm}
\begin{algorithm}[H]
\SetAlgoLined
\label{alg:hit_run}
 \KwIn{a target distribution $\pi_g$ on $\K$ induced by a nonnegative function $g$; $x\in{\sf dom}(g)$; linear transformation $\Sigma$; number of steps $m$}
 \KwOut{a point $x' \in {\sf dom}(g)$ generated by one-step Hit-and-Run walk}
 initialization: a starting point $x \in {\sf dom}(g)$ \;
 \For{$i=1,\ldots,m$}{
 1. Choose a random line $\ell$ that passes through $x$. The direction is uniform from the surface of ellipse given by $\Sigma$ acting on sphere \;
 2. On the line $\ell$ run the unidimensional rejection sampler with $\pi_g$ restricted to the line (and supported on $\K$) to propose a successful next step $x'$ \;
 }
 \caption{Hit-and-Run}
\end{algorithm}
\vspace{0.5cm}

In order to handle approximate log-concave functions we need to address implementation issues and address the theoretical difficulties caused by deviations from log-concavity which can include discontinuities. The main implementation difference lies is the unidimensional sampler. No longer a binary search yields the maximum over the line and its end points since $\beta$-log-concave functions can be discontinuous and multimodal.
We now turn to these questions.

\subsection{Unidimensional sampling scheme}

As a building block of the randomized method for solving the optimization problem \eqref{eq:opt_objective}, we introduce a one-dimensional sampling procedure. Let $g$ be a unidimensional $\beta$-log-concave function on a bounded line segment $\ell$, and let $\pi_g$ be the induced normalized measure. The following guarantee will be proved in this section.
\begin{lemma}
	\label{lem:1dsampler}
Let $g$ be a $\beta$-log-concave function and let $\ell$ be a bounded line segment $\ell$ on $\K$. Given a target accuracy $\tilde{\epsilon}\in(0, e^{-2\beta}/2)$, Algorithm~\ref{alg:rej_samp} produces a point $X\in \ell$ with a distribution $\hat \pi_{g,\ell}$ such that
$$d_{\sf tv}(\pi_{g,\ell},\hat\pi_{g,\ell})\leq 3e^{2\beta}\tilde{\epsilon}.$$
Moreover, the method requires $\mathcal{O}^*(1)$ evaluations of the unidimensional $\beta$-log-concave function $g$ if $\beta$ is $\mathcal{O}(1)$.
\end{lemma}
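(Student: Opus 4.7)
The plan is to realize Algorithm~\ref{alg:rej_samp} as a rejection sampler against a piecewise-constant upper envelope $\bar g \geq g$ on $\ell$. The TV error will come from truncating a low-density tail, and the $\mathcal{O}^*(1)$ query bound will follow from controlling $\int_\ell \bar g / \int_\ell g$ by a constant depending only on $e^\beta$. The starting point is Lemma~\ref{lem:1dsandwich}, which guarantees a log-concave $h$ on $\ell$ with $e^{-\beta} h(x) \leq g(x) \leq h(x)$; since $h$ is log-concave on a line, it is unimodal and continuous, so I can reason about $h$ conceptually even though the oracle only returns $g$.

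First I would approximately locate the mode of $h$. Evaluating $g$ on a grid of $\mathcal{O}(\log(1/\tilde{\epsilon}))$ points on $\ell$ and letting $\hat{x}$ be the grid point maximizing $g$, I would argue via Lemma~\ref{lem:volume} (with $n=1$) that the super-level set $\{h \geq M_h/2\}$ has length bounded below in terms of $|\ell|$, so a sufficiently fine grid must hit it, yielding $\hat{M} := g(\hat{x}) \in [e^{-\beta} M_h/2,\ M_h]$. Then I would build dyadic shells around $\hat{x}$: for $k = 0, 1, \ldots, K := \lceil \log_2(1/\tilde{\epsilon}) \rceil$, define $S_k$ to lie between consecutive dyadic super-level sets of $h$ (the boundaries located by bisection on $\ell$, using $g$ as an $e^\beta$-proxy for $h$), set $\bar g \equiv 2^{-k+1} e^\beta \hat{M}$ on $S_k$ to ensure $\bar g \geq h \geq g$ pointwise, and truncate $\bar g$ to zero outside $\bigcup_{k \leq K} S_k$.

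The quantitative bounds then come in three steps. The envelope integral is controlled by Lemma~\ref{lem:volume}: in one dimension $\mathrm{vol}(\bigcup_{j \leq k} S_j) \leq (k+1)\,\mathrm{vol}(S_0)$, so
\[
\int \bar g \;\leq\; \sum_{k=0}^{K} 2^{-k+1} e^\beta \hat{M}\,(k+1)\,\mathrm{vol}(S_0) \;\leq\; C e^\beta M_h \,\mathrm{vol}(\{h \geq M_h/2\}),
\]
while $\int_\ell g \geq e^{-\beta}(M_h/2)\,\mathrm{vol}(\{h \geq M_h/2\})$, giving $\int \bar g / \int g \leq C' e^{2\beta}$. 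Hence the rejection step---propose from the piecewise-uniform $\bar g/\int\bar g$ and accept with probability $g(X)/\bar g(X)$---has acceptance probability $\Omega(e^{-2\beta})$ and uses $\mathcal{O}(1)$ evaluations of $g$ per successful sample when $\beta = \mathcal{O}(1)$. The truncation error is controlled by Lemma~\ref{lem:tails}: $\mathbb{P}_{X \sim \pi_h}(h(X) < \tilde{\epsilon} M_h) < \tilde{\epsilon}$, and the sandwich inflates this by at most $e^\beta$ when transferred to $\pi_g$; combined with the analogous inflation on the rejection side this yields the stated $3 e^{2\beta}\tilde{\epsilon}$ TV bound after collecting constants.

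The main obstacle I anticipate is the interplay between the (possibly discontinuous, multimodal) $g$ and the (continuous, unimodal) $h$: all the useful monotonicity and shell-length bounds are statements about $h$, but the algorithm can only query $g$. The resolution is to sandwich consistently---any grid or bisection argument on $g$ transfers to $h$ up to the universal factor $e^\beta$, which is absorbed into the constants. Making the shell-boundary bisection robust despite the $e^\beta$ oracle noise is the most delicate technical piece, and is where the hypothesis $\tilde{\epsilon} < e^{-2\beta}/2$ enters to keep the truncated tail strictly negligible relative to the sandwich slack.
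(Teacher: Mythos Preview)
Your proposal analyzes a different procedure from the one the lemma is about. Algorithm~\ref{alg:rej_samp} is a \emph{single-piece} rejection sampler: step~\textbf{(a)} finds one near-maximizer $p$ via the adaptive three-point search of Algorithm~\ref{alg:initial}, step~\textbf{(b)} finds two endpoints $e_{-1},e_1$ by binary search, and then the algorithm proposes $x$ uniformly on $[e_{-1},e_1]$ and accepts with probability $g(x)/(g(p)e^{3\beta})$. There are no dyadic shells and no piecewise-uniform proposal. The paper's proof therefore bounds the truncation error $\mathbb{P}_{z\sim g}(z\notin[e_{-1},e_1])$ via Lemma~\ref{lem:tails} (this is where the $3e^{2\beta}\tilde\epsilon$ comes from), and lower-bounds the acceptance probability by $\Omega\bigl(e^{-5\beta}/\log(1/\tilde\epsilon)\bigr)$ using Lemma~\ref{lem:volume} to compare the length of $[e_{-1},e_1]$ to the length of a high level set of $h$. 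Your dyadic envelope, even if correct, would establish guarantees for a different sampler, not for Algorithm~\ref{alg:rej_samp}.

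Independently, your mode-finding step has a genuine gap. You claim that Lemma~\ref{lem:volume} with $n=1$ implies $\{h\geq M_h/2\}$ has length bounded below in terms of $|\ell|$, so a grid of $\mathcal{O}(\log(1/\tilde\epsilon))$ points must hit it. But Lemma~\ref{lem:volume} only compares two super-level sets of $h$ to each other, not a super-level set to $|\ell|$. The example $h(x)=\exp(-|x|/\delta)$ on $\ell=[-1,1]$ has $\{h\geq M_h/2\}$ of length $2\delta\log 2$, arbitrarily small while $|\ell|=2$; no fixed-size grid can be guaranteed to hit this region. The paper avoids this by an \emph{adaptive} search (Algorithm~\ref{alg:initial}) whose termination criterion depends on the Lipschitz constant $\tilde L$ of $-\log h$; the resulting $\log\tilde L$ factor is exactly what is hidden in the $\mathcal{O}^*(1)$ (see the sentence preceding Lemma~\ref{Lemma:InitStepa}). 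Your $\mathcal{O}(\log(1/\tilde\epsilon))$ grid budget cannot accomplish this step without that Lipschitz dependence, so the argument as written does not close.
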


\vspace{1cm}
\begin{algorithm}[H]
\SetAlgoLined
\label{alg:rej_samp}
 \KwIn{unidimensional $\beta$-log-concave function $g$ defined on a bounded segment $\ell=[\uu{x},\bb{x}]$; accuracy $\tilde{\epsilon}>0$}
 \KwOut{A sample $x$ with distribution $\hat\pi_{g,\ell}$ close to $\pi_{g,\ell}$}
 Initialization: \textbf{(a)} compute a point $p \in \ell$ s.t.  $g(p) \geq e^{-3\beta} \max_{z \in \ell} g(z)$ \;
 \textbf{(b)} given target accuracy $\tilde{\epsilon}$, find two points $e_{-1},e_1$ on two sides of $p$ s.t.
 	\begin{equation}\label{eq:init_ei}
 \begin{array}{c}
e_{-1}=\uu{x} \ \ \mbox{if }\ \   g(\uu{x})\geq \frac{1}{2}e^{-\beta} \tilde{\epsilon} g(p), \ \ \ \frac{1}{2}e^{-\beta} \tilde{\epsilon} g(p) \leq g(e_{-1}) \leq \tilde{\epsilon} g(p) \ \ \mbox{otherwise}\\
\\
e_{1}=\bb{x} \ \ \mbox{if }\ \   g(\bb{x})\geq \frac{1}{2}e^{-\beta} \tilde{\epsilon} g(p), \ \ \	\frac{1}{2}e^{-\beta} \tilde{\epsilon} g(p) \leq g(e_1) \leq \tilde{\epsilon} g(p) \ \ \mbox{otherwise};
\end{array}	\end{equation}
 \While{sample rejected}{
  pick $x \sim {\sf unif}([e_{-1},e_1])$ and pick $r \sim {\sf unif}([0,1])$ independently\;
  \eIf{$r  \leq g(x)/\{g(p)e^{3\beta}\}$}{
   accept $x$ and stop\;
   }{
   reject $x$ \;
  }
 }
 \caption{Unidimensional rejection sampler}
\end{algorithm}
\vspace{0.5cm}

The proposed method for sampling from the $\beta$-log-concave function $g$ is a  \emph{rejection} sampler that requires two initialization steps. We first show how to implement step $\mathbf{(a)}$.
% About the initialization step, there remains a question, i.e., how to come up with a point $g(p) \geq e^{-3\beta} \max_{z \in \ell} g(z)$.

%\vspace{1cm}
\begin{algorithm}[H]
\SetAlgoLined
\label{alg:initial}
 \KwIn{unidimensional $\beta$-log-concave function $g$ defined on a bounded interval $\ell = [\uu{x},\bb{x}]$}
 \KwOut{a point $p \in \ell$ s.t.  $g(p) \geq e^{-3\beta} \max_{z \in \ell} g(z)$}
 \While{did not stop}{
  set $x_l = \frac{3}{4} \uu{x} + \frac{1}{4} \bb{x}$, $x_c = \frac{1}{2} \uu{x} + \frac{1}{2} \bb{x}$ and $x_r = \frac{1}{4} \uu{x} + \frac{3}{4} \bb{x}$ \;
  \eIf{$|\log g(x_l) - \log g(x_r)|>\beta $}{
   set $[\uu{x},\bb{x}]$ as either $[x_l, \bb{x}]$ or $[\uu{x}, x_r]$ accordingly \; %go back to while loop\;
   }{
       \uIf{$|\log g(x_l) - \log g(x_c)|>\beta$}{
            set $[\uu{x},\bb{x}]$ as either $[x_l, \bb{x}]$ or $[\uu{x}, x_c]$ accordingly \; %go back to while loop \;
          }
       \uElseIf{$|\log g(x_r) - \log g(x_c)|>\beta$}{
            set $[\uu{x},\bb{x}]$ as either $[\uu{x}, x_r]$ or $[x_c, \bb{x}]$ accordingly \; %go back to while loop \;
          }
        \Else{ output $p = {\displaystyle \arg\max_{x\in\{x_l,x_c,x_r\}}}  f(x)$ and stop \;}
     }
 }
 \caption{Initialization Step \textbf{(a)}}
\end{algorithm}
\vspace{0.5cm}

For the $\beta$-log-concave function $g$, let $h$ be a log-concave function  in Lemma~\ref{lem:1dsandwich} and let $\tilde{L}$ denote the Lipschitz constant of the convex function ~$-\log h$. In the following two results, the $\mathcal{O}^*$ notation hides a $\log(\tilde{L})$ factor.

\begin{lemma}[Initialization Step \textbf{(a)}]\label{Lemma:InitStepa}
Algorithm~\ref{alg:initial} finds a point $p \in \ell$ that satisfies $g(p) \geq e^{-3\beta} \max_{z \in \ell} g(z)$.
Moreover, this step requires $\mathcal{O}^*(1)$ function evaluations.
\end{lemma}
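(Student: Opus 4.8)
The plan is to carry out all of the geometric reasoning on the one-dimensional log-concave \emph{envelope} $h$ furnished by Lemma~\ref{lem:1dsandwich}, which satisfies $e^{-\beta}h(x)\le g(x)\le h(x)$ for $x\in\ell$, while only ever evaluating $g$ itself. Fix a maximizer $m^\star$ of $h$ over $\ell=[\uu{x},\bb{x}]$ and set $M_h:=h(m^\star)$ and $M:=\max_{z\in\ell}g(z)$; since $g\le h$ we have $M\le M_h$, so it suffices to output $p$ with $g(p)\ge e^{-3\beta}M_h$. Because $h$ is log-concave, $\phi:=\log h$ is concave, hence unimodal on $\ell$: it is nondecreasing on $[\uu{x},m^\star]$ and nonincreasing on $[m^\star,\bb{x}]$ (whether or not the unconstrained mode of $h$ lies inside $\ell$).

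\emph{Invariant and per-round cost.} I would first show, by induction on the rounds of Algorithm~\ref{alg:initial}, that the working interval always contains $m^\star$. Suppose this holds and consider one round. If $|\log g(x_l)-\log g(x_r)|>\beta$, say $g(x_l)>e^{\beta}g(x_r)$, then $h(x_l)\ge g(x_l)>e^{\beta}g(x_r)\ge h(x_r)$; were $m^\star\ge x_r$, unimodality of $\phi$ would force $h(x_l)\le h(x_r)$, a contradiction, so $m^\star<x_r$ and the update to $[\uu{x},x_r]$ retains $m^\star$. The cases driven by the pair $(x_l,x_c)$ or $(x_c,x_r)$ are identical. Since $x_l,x_c,x_r$ split the interval into four equal quarters, each round multiplies its length by at most $3/4$ and uses at most three evaluations of $g$.

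\emph{The stopping-branch guarantee.} When Algorithm~\ref{alg:initial} outputs, the three values $g(x_l),g(x_c),g(x_r)$ lie in $[e^{-\beta}g(p),\,g(p)]$ with $g(p)=\max\{g(x_l),g(x_c),g(x_r)\}$, and by the invariant $m^\star$ lies in one of the four quarters. Take the quarter $[x_l,x_c]$ (the other three are symmetric, with $x_l\leftrightarrow x_r$); if $m^\star\in\{x_l,x_c\}$ then $h(m^\star)=M_h$ at a test point and $g(p)\ge e^{-\beta}M_h$ is immediate, so assume $m^\star<x_c$. Applying concavity of $\phi$ to $m^\star<x_c<x_r$ gives
\[
\phi(m^\star)-\phi(x_c)\ \le\ \frac{x_c-m^\star}{x_r-x_c}\,\bigl(\phi(x_c)-\phi(x_r)\bigr)\ \le\ \phi(x_c)-\phi(x_r),
\]
where the last step uses $x_c-m^\star\le x_c-x_l=x_r-x_c$ together with $\phi(x_c)\ge\phi(x_r)$ (valid since $m^\star\le x_c$ makes $\phi$ nonincreasing on $[x_c,\bb{x}]$). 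Hence $\phi(m^\star)\le 2\phi(x_c)-\phi(x_r)$. Feeding in the envelope bounds $\phi(x_c)=\log h(x_c)\le \log g(x_c)+\beta\le\log g(p)+\beta$ and $\phi(x_r)=\log h(x_r)\ge\log g(x_r)\ge\log g(p)-\beta$ yields $\phi(m^\star)\le\log g(p)+3\beta$, i.e.\ $g(p)\ge e^{-3\beta}M_h\ge e^{-3\beta}M$. The quarter $[\uu{x},x_l]$ is handled the same way using the triple $m^\star<x_l<x_c$.

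\emph{Termination, complexity, and the main difficulty.} By the invariant the interval always contains $m^\star$ and shrinks by a factor at least $4/3$ each round, so after $N:=\mathcal{O}\bigl(\log(\tilde{L}\,|\ell|/\beta)\bigr)=\mathcal{O}^*(1)$ rounds either the stopping branch has fired—and the previous paragraph applies—or the current interval has length at most $\beta/\tilde{L}$. In the latter case, since $-\phi$ is $\tilde{L}$-Lipschitz and $m^\star$ lies in the interval, every point $x$ of it has $\phi(x)\ge\phi(m^\star)-\beta$, whence $g(x)\ge e^{-\beta}h(x)\ge e^{-2\beta}M_h\ge e^{-3\beta}M$, so we may stop and output any of $x_l,x_c,x_r$; in all cases the number of evaluations of $g$ is $\mathcal{O}^*(1)$. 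The one genuinely delicate point is the stopping-branch guarantee: $g$ is neither continuous nor unimodal, so all the stopping rule certifies is that the three test values agree up to a factor $e^{\beta}$, and this local fact must be promoted to a global statement about $M_h$. The resolution is to argue entirely about the concave function $\phi=\log h$ and to invoke a \emph{one-sided} (extrapolation) consequence of concavity rather than interpolation; this is precisely where the factor degrades from the ideal $e^{-2\beta}$ (one $\beta$ for each side of the sandwich) to $e^{-3\beta}$ (the extra $\beta$ coming from the extrapolation step).
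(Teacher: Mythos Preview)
Your proposal is correct and follows essentially the same approach as the paper: both transfer the reasoning to the log-concave envelope $h$ via Lemma~\ref{lem:1dsandwich}, maintain the invariant that the maximizer of $h$ remains in the working interval, and use concavity/convexity of $\log h$ for both the branching step and the stopping guarantee. The only cosmetic differences are that the paper phrases things via the convex function $\tilde h=-\log h$ and a supporting-line argument on each quarter, whereas you use the equivalent three-point concavity (extrapolation) inequality directly; your termination paragraph is also slightly more careful, adding an explicit round cap rather than relying on the stopping branch to fire.
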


\begin{lemma}[Initialization Step \textbf{(b)}]\label{Lemma:InitUni}
Let $\ell=[\uu{x},\bb{x}]$ and $p\in \ell$. The binary search algorithm finds $e_{-1}\in [\uu{x},p]$ and $e_1 \in [ p, \bb{x}]$ such that \eqref{eq:init_ei} holds.
Moreover, this step requires $\mathcal{O}^*(1)$ function evaluations.
\end{lemma}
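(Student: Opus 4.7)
The plan is to lift the binary search to the log-concave envelope supplied by Lemma~\ref{lem:1dsandwich}: pick $h$ log-concave with $e^{-\beta}h(x)\le g(x)\le h(x)$ on $\ell$, and set $\phi:=\log h$, a concave $\tilde L$-Lipschitz function. I describe the construction of $e_{-1}\in[\uu{x},p]$; the case of $e_1\in[p,\bb{x}]$ is symmetric. Write $T:=\tilde\epsilon\,g(p)$ and $T':=\tfrac12 e^{-\beta}T$. If $g(\uu{x})\ge T'$, simply output $e_{-1}=\uu{x}$; otherwise run binary search on $[a,b]\subseteq[\uu{x},p]$, initialized at $[\uu{x},p]$ (so that $g(p)=T/\tilde\epsilon>T$ since $\tilde\epsilon<1$), maintaining the invariants $g(a)<T'$ and $g(b)>T$. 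At each step evaluate $g$ at $m=(a+b)/2$: if $g(m)<T'$ set $a\leftarrow m$; if $g(m)>T$ set $b\leftarrow m$; otherwise accept $e_{-1}=m$. Correctness is immediate, since upon acceptance $g(e_{-1})\in[T',T]$, matching \eqref{eq:init_ei}.

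The substantive task is to show that $\mathcal{O}^*(1)$ iterations suffice. Through the sandwich the invariants translate into $\phi(a)\le\log g(a)+\beta<\log T-\log 2$ and $\phi(b)\ge\log g(b)>\log T$, so $\phi(b)-\phi(a)>\log 2$ throughout the search. Let $q\in\ell$ be a maximizer of $\phi$; by concavity, $\phi$ is non-decreasing on $(-\infty,q]$ and non-increasing on $[q,\infty)$. Since $\phi(b)>\phi(a)$, the case $q<a$ is impossible (it would force $\phi(b)\le\phi(a)$), so $q\ge a$. Then $\phi(q)\ge\phi(b)$, giving $\phi(q)-\phi(a)>\log 2$, and the Lipschitz bound on $[a,q]$ yields $q-a>\log 2/\tilde L$. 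If $q\le b$ this gives $b-a\ge q-a>\log 2/\tilde L$; if $q>b$ then $\phi$ is monotone on $[a,b]$ and the Lipschitz bound on $[a,b]$ itself yields $b-a>\log 2/\tilde L$. Either way, $b-a>\log 2/\tilde L$ holds while the invariants are live. Since $b-a$ halves on every non-accepting iteration and starts at most $2R$, the algorithm must accept within $\lceil\log_2(2R\tilde L/\log 2)\rceil=\mathcal{O}(\log(R\tilde L))=\mathcal{O}^*(1)$ iterations.

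The main obstacle is the possible non-monotonicity of $h$ on $[\uu{x},p]$: if the mode of $h$ lies strictly inside the interval, the usual ``cross the target level'' heuristic for binary search breaks, because $g$ may overshoot $T$ simply by being close to the mode rather than because the query is on the correct side of the target. The fix used above is to work with the concave envelope $\phi$ directly and exploit concavity to anchor the mode $q$ to the right of $a$, so that the Lipschitz estimate on $[a,q]$ (rather than on $[a,b]$) supplies the required lower bound on the interval length. This makes the whole argument insensitive to the location of $q$ inside $[\uu{x},p]$, and the same scheme, reflected, gives $e_1\in[p,\bb{x}]$.
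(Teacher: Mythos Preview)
Your proof is correct and follows essentially the same route as the paper: the same binary-search invariants $g(a)<T'$ and $g(b)>T$, and termination via the Lipschitz bound on $\log h$. One remark: the obstacle you flag in your third paragraph is a phantom. Non-monotonicity of $h$ on $[\uu{x},p]$ is irrelevant to termination, because the Lipschitz inequality $|\phi(b)-\phi(a)|\le\tilde L(b-a)$ combined with $\phi(b)-\phi(a)>\log 2$ already yields $b-a>\log 2/\tilde L$ directly, with no need to locate the mode $q$ or invoke concavity. The paper's proof makes exactly this simpler move in contrapositive form: once $|x_l-x_r|<1/(2\tilde L)$, the Lipschitz bound forces $g(x_l)/g(x_r)<e^{1/2+\beta}<2e^\beta$, contradicting the invariants.
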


According to Lemmas~\ref{lem:1dsampler},~\ref{Lemma:InitStepa},~\ref{Lemma:InitUni},  the unidimensional sampling method produces a sample from a distribution that is close to the desired $\beta$-log-concave distribution. Furthermore, the method requires a number of queries that is logarithmic in all the parameters.

%%%%%%%%%%%%%%%%%%%%%%%%%%%%%%%%%%%%%%%%%%%%%%%%%%%%%%%%%
% Mixing Section
%%%%%%%%%%%%%%%%%%%%%%%%%%%%%%%%%%%%%%%%%%%%%%%%%%%%%%%%%
\subsection{Mixing time}

 In this section, we will analyze mixing time of the Hit-and-Run algorithm with a $\beta/2$-approximate log-concave function $\HH$, namely
\begin{align}
	\label{eq:near_log_concave}
	\exists ~~\text{ log-concave }~~~ h ~~~\text{ s.t. }~~~~~ \sup_{\K}| \log \HH - \log h|\leq \beta/2.
\end{align}
In particular, this implies that $\HH$ is $\beta$-log-concave, according to Lemma~\ref{lem:linfty_beta_concave}. In this section, we provide the analysis of Hit-and-Run with the linear transformation $\Sigma=I$ and remark that the results extend to other linear transformations employed to round the log-concave distributions.

The mixing time of a geometric random walk can be bounded through the spectral gap of the induced Markov chain. In turn, the spectral gap relates to the so called conductance which has been a key quantity in the literature. Consider the transition probability of Hit-and-Run with a density $g$, namely
$$
P_u^{\HH}(A) =  \frac{2}{n \pi_n} \int_A \frac{\HH(x) dx}{\mu_\HH (u,x) |x - u|^{n-1}}
$$
where
$\mu_\HH(u,x) = \int_{\ell(u,x) \cap \K} \HH(y) dy$. Let $\pi_\HH(x) = \frac{\HH(x)}{\int_{y \in \K} \HH(y) dy}$ be the probability measure induced by the function $\HH$. The conductance for a set $S\subset \K$ with $0<\pi_\HH(S)<1$ is defined as
$$
\phi^\HH(S)  = \frac{\int_{x \in S} P_x^\HH(\K \backslash S) d \pi_\HH}{ \min\{ \pi_\HH(S), \pi_\HH(\K\backslash S) \}},
$$
and $\phi^\HH$ is the minimum conductance over all measurable sets. The $s$-conductance is, in turn, defined as
$$
\phi_s^\HH  = \inf_{S\subset \K, s<\pi_\HH(S)\leq 1/2} \frac{\int_{x \in S} P_x^\HH(\K \backslash S) d \pi_\HH}{ \pi_\HH(S) -s }.
$$ By definition we have $\phi^\HH \leq \phi^\HH_s$ for all $s>0$.

The following theorem provides us an upper bound on the mixing time of the Markov chain based on conductance. Let $\sigma^{(0)}$ be the initial distribution and $\sigma^{(m)}$ the distribution of the $m$-th step of the random walk of Hit-and-Run with exact sampling from the distribution $\pi_\HH$ restricted to the line.

\begin{theorem}[\cite{lovasz1993random}; \cite{lovasz2007geometry}, Lemma 9.1]
	\label{thm:contraction_LV}
	Let $0 < s \leq 1/2$ and let $g:\K\to\reals_+$ be arbitrary. Then for every $m \geq 0$,
$$
d_{\sf tv}(\pi_g,\sigma^{(m)}) \leq H_0 \left(1 - \frac{(\phi^g)^2}{2}\right)^m\ \ \ \mbox{and} \ \ \ d_{\sf tv}(\pi_g,\sigma^{(m)}) \leq H_s + \frac{H_s}{s} \left(1 - \frac{(\phi_s^g)^2}{2}\right)^m
$$
where
$$
H_0=\sup_{x\in \K} \pi_g(x)/\sigma^{0}(x) \ \ \ \mbox{and} \ \ \ H_s = \sup\{|\pi_g(A) - \sigma^{(0)}(A) |: \pi_g(A) \leq s \}.
$$
\end{theorem}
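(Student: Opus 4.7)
The plan is to follow the Lov\'asz--Simonovits ``averaging on the quantile function'' technique, which converts the theorem into a one-dimensional concavity argument. First I would encode the signed discrepancy between $\sigma^{(t)}$ and $\pi_g$ as a one-dimensional function. Define
\[
G_t(y) \;=\; \sup\left\{\,\sigma^{(t)}(A) - \pi_g(A) \,:\, A \subset \K \text{ measurable, } \pi_g(A) = y\,\right\}, \qquad y \in [0,1],
\]
and extend $G_t$ to a concave function on $[0,1]$ with $G_t(0) = G_t(1) = 0$ by taking the concave envelope. Then $d_{\sf tv}(\pi_g, \sigma^{(t)}) = \max_{y} G_t(y)$. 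The initial bound $G_0(y) \leq H_0\,\min(y, 1-y)$ follows directly from the definition of $H_0$ for the first inequality; for the second, $G_0(y) \leq H_s$ on $[s, 1-s]$ with controlled tails of measure $\leq s$.

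The technical heart of the proof is a one-step averaging inequality: with $\alpha(y) = \phi^g \min(y, 1-y)$,
\[
G_{t+1}(y) \;\leq\; \tfrac{1}{2}\bigl(\,G_t(y - \alpha(y)) + G_t(y + \alpha(y))\,\bigr).
\]
I would prove this by fixing a set $A$ nearly attaining the supremum in the definition of $G_{t+1}(y)$ and decomposing the mass entering $A$ at step $t+1$ into a ``stay'' piece (from $A$ itself) and a ``cross'' piece (from $\K \setminus A$). Reversibility of Hit-and-Run with respect to $\pi_g$ equates the two cross-flows across $\partial A$, and the definition of $\phi^g$ forces the crossing flow to spread over $\pi_g$-measure at least $\alpha(y)$. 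A rearrangement that peels the largest $G_t$-contributions symmetrically around $y$, combined with the concavity of $G_t$, yields the displayed averaging bound. Iterating this averaging against the majorant family $C\sqrt{y(1-y)}$, which is stable under the right-hand averaging operator and shrinks by a factor $1 - (\phi^g)^2/2$ per step, then gives $\max_y G_m(y) \leq H_0 \bigl(1 - (\phi^g)^2/2\bigr)^m$, which is the first inequality of the theorem.

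For the second inequality I would repeat the same iteration but only on $y \in (s, 1-s)$: within this range the $s$-conductance $\phi^g_s$ governs the averaging step, and the excluded exceptional set of measure at most $s$ contributes an additive $H_s$ plus the $H_s/s$ prefactor, the latter accounting for cumulative leakage of probability into the mass-$\leq s$ tails over $m$ steps. The main obstacle is the one-step averaging inequality, specifically converting the conductance definition into a sharp spread-out estimate for the incoming cross-flow while preserving the measure constraint $\pi_g(A) = y$ needed to invoke $G_t$. The $s$-conductance variant is the most delicate part, because excising the mass-$\leq s$ tails at every iteration must be done without destroying the concavity of $G_t$ on $(s, 1-s)$ and without losing too much in the leakage bookkeeping.
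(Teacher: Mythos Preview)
The paper does not give its own proof of this theorem: it is stated with attribution to \cite{lovasz1993random} and \cite{lovasz2007geometry} (Lemma~9.1) and is invoked as a black box in the proofs of Theorems~\ref{thm:mixing_LC} and~\ref{Thm:Allerrors}. So there is no in-paper argument to compare your proposal against.

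That said, your outline is the Lov\'asz--Simonovits argument that the cited references use, and the plan is correct in its broad strokes. Two small cautions. First, the one-step averaging inequality in the form you wrote typically requires a laziness/strong-aperiodicity assumption (e.g.\ $P_x(\{x\})\geq 1/2$, or at least that each one-step distribution dominates a fixed fraction of the stationary mass near $x$); without it the ``stay'' piece need not be large enough to produce the symmetric spread $y\pm\alpha(y)$. You should state and use this hypothesis explicitly rather than appealing only to reversibility. Second, in the $s$-conductance case the standard treatment does not excise the tails at every step; instead one bounds $G_0$ on all of $[0,1]$ by the piecewise function $H_s + (H_s/s)\min(\sqrt{y},\sqrt{1-y})$ and then applies the same averaging iteration globally, with $\phi_s^g$ replacing $\phi^g$ once $y\in(s,1-s)$. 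Framing it as ``leakage bookkeeping'' is workable but makes the constants harder to track; the majorant approach gives the stated $H_s + (H_s/s)(1-(\phi_s^g)^2/2)^m$ directly.
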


Building on \cite{lovasz2006fast}, we prove the following result that provides us with a lower bound on the conductance for Hit-and-Run induced by a log-concave $h$. The proof of the result below follows the proof of Theorem 3.7 in \cite{lovasz2006fast} with modifications to allow unbounded sets $\K$ without truncating the random walk. 
\begin{theorem}[Conductance Lower Bound for Log-concave Measures with Unbounded Support]
	\label{thm:cond_lower_bound}
Let $h$ be a log-concave function in $\mathbb{R}^n$ such that the level set of measure $\frac{1}{8}$ contains a ball of radius $r$. Define $R =  (\E_h \| X - z_h  \|^2 )^{1/2}$, where $z_h = \E_h X$ and $X$ is sampled from the log concave measure induce by $h$. Then for any subset $S$, with $\pi_h(S) = p \leq \frac{1}{2}$, the conductance of Hit-and-Run satisfies
$$
\phi^h(S) \geq \frac{1}{C_1 n \frac{R}{r} \log^2 \frac{nR}{rp}}
$$
where $C_1>0$ is a universal constants.
\end{theorem}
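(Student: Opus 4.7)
The plan is to adapt the proof of Theorem 3.7 of \cite{lovasz2006fast} to the case of unbounded $\K$, since the stated bound has exactly their form but drops the truncation assumption on the support. The argument proceeds via the standard three-step framework for conductance of Hit-and-Run: (i) partition $S$ and $\K\setminus S$ into a ``core'' where the one-step transition to the opposite side is small, and a ``boundary'' where it is non-trivial; (ii) show that points in the two cores must be far apart in the hit-and-run cross-ratio metric, because otherwise their one-step distributions would overlap too much; (iii) invoke an isoperimetric inequality for log-concave measures to lower bound the mass of the boundary.

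Concretely, fix $S$ with $\pi_h(S) = p\leq 1/2$ and, for a suitable absolute constant $c$, define the cores
\[
S' = \{x \in S : P_x^h(\K\setminus S) < c/\log(nR/rp)\}, \qquad S'' = \{x \in \K\setminus S : P_x^h(S) < c/\log(nR/rp)\}.
\]
If either $\pi_h(S') \leq \pi_h(S)/2$ or $\pi_h(S'') \leq \pi_h(\K\setminus S)/2$, the required bound on $\phi^h(S)$ follows directly by integrating $P_x^h(\K\setminus S)$ over the corresponding boundary set. Otherwise we must prove that $S'$ and $S''$ are well separated.

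The key local step, taken from \cite{lovasz2006fast}, is a coupling/overlap lemma stating that for $x,y\in\K$ with cross-ratio distance $d_\K(x,y) \leq 1/(C n \log(nR/rp))$, one has $d_\mathrm{tv}(P_x^h, P_y^h) \leq 1 - 2c$. Combined with the triangle inequality this is incompatible with $x\in S'$, $y\in S''$, so the cores are separated by at least $\gtrsim 1/(n\log(nR/rp))$ in the cross-ratio metric. Conversion to Euclidean distance via the inscribed ball of radius $r$ and the $\ell_2$ moment $R$ gives a Euclidean separation of order $r/(nR\log(nR/rp))$. The log-concave isoperimetric inequality of \cite{lovasz2007geometry} (Theorem~2.6 of \cite{lovasz2006fast}) then yields $\pi_h(\K\setminus(S'\cup S'')) \gtrsim \min\{\pi_h(S'),\pi_h(S'')\}/\bigl(n(R/r)\log(nR/rp)\bigr)$, and combining with the core definition gives the desired bound $\phi^h(S) \geq 1/\bigl(C_1 n (R/r) \log^2(nR/rp)\bigr)$.

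The main obstacle, and the only genuine modification relative to \cite{lovasz2006fast}, is handling the unbounded support inside the coupling lemma and in the definition of $R$. In the original proof the chain is first restricted to $\K \cap B(z_h, C'R\sqrt{n/p})$ and the truncation error absorbed into the conductance bound. Instead I would use Lemma~\ref{lem:tails} applied to the one-dimensional marginals of $h$ along the sampled line $\ell$: since the restriction of $h$ to any line is itself log-concave, the tail of $\mu_h(u,x)$ beyond any radius $T$ decays exponentially in $T$ times the density at the mode, and for $T \gtrsim R\sqrt{n}\log(n/p)$ this tail contributes only a $(1\pm o(1))$-factor to every integral $\int_{\ell\cap \K} h(y)dy$ appearing in $P_x^h$ and to every isoperimetric estimate. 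Consequently $R = (\E_h\|X-z_h\|^2)^{1/2}$ plays the same structural role as the truncation diameter did in \cite{lovasz2006fast}, and the constants propagate through the remainder of the argument unchanged.
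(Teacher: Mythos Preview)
Your high-level three-step framework is correct and matches the paper's (and \cite{lovasz2006fast}'s) architecture. However, two aspects of your plan differ from the paper in ways that matter.

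\textbf{Core threshold.} You set the core threshold at $c/\log(nR/rp)$, whereas the paper uses a fixed constant $1/1000$ exactly as in \cite{lovasz2006fast}. The $\log$ factor in the final bound does not come from the core threshold; it comes from removing an exceptional set $W_1=\{u:\alpha(u)\geq 2^{27}nD/p\}$ of points where $\alpha(u)$ is large, and then invoking Lemma~6.8 of \cite{lovasz2006hit} on $S_1'\setminus W$ and $S_2'\setminus W$. Your single ``coupling/overlap lemma'' in terms of $d_\K$ conflates the roles of $\alpha(u)$, $F(u)$, and $s(u)$, which in the actual argument are handled separately via Lemmas~6.8, 3.2, and 3.5.

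\textbf{Handling unboundedness.} This is the main issue. The paper does \emph{not} control one-dimensional tails along lines as you propose. Instead, it performs an implicit spatial truncation inside the proof: define $D=R\log(C'n^2/p)$ and remove the set $W_2=\{u:\|u-z_h\|\geq D\}$, which has $\pi_h$-measure at most $p/(Cn^2)$ by Lemma~5.17 of \cite{lovasz2007geometry}. One then works on the bounded convex body $\K\setminus W_2$, where the cross-ratio metric $d_{\K\setminus W_2}$ is well-defined and Theorem~2.1 of \cite{lovasz2006hit} applies directly with the weight $\bar h(x)=s(x)/(2^{16}D\sqrt{n}\log(nD/p))$. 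The nontrivial remaining step is to lower-bound $\int_{\K\setminus W_2}s(x)h(x)\,dx/\int_\K h(x)\,dx$, which the paper does by a layer-cake argument over the levels of $\lambda(x,t)$, subtracting off the mass of $W_2$ inside the integral.

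Your proposed route via Lemma~\ref{lem:tails} on line restrictions has a gap: that lemma controls $\mathbb{P}(h(X)\leq t)$ in terms of level sets, not spatial tails, and even granting a spatial tail bound on each line, it does not give you a well-defined cross-ratio metric or a convex body on which to invoke the isoperimetric Theorem~2.1. The step ``Conversion to Euclidean distance via the inscribed ball of radius $r$ and the $\ell_2$ moment $R$'' is precisely where a bounded diameter $D$ is needed, and you have not produced one. The paper's device of excising $W_2$ is exactly what supplies this diameter without altering the algorithm.
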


Although Theorem \ref{thm:cond_lower_bound} is new, very similar conductance bounds allowing for unbounded sets were establish before. Indeed in Section 3.3 of  \cite{lovasz2006fast} the authors discuss the case of unbounded $\K$ and propose to truncate the set to its effective diameter and use the fact that this distribution would be close to the distribution of the unrestricted set. Such truncation needs to be enforces which requires to change the implementation of the algorithm and lead to another (small) layer of approximation errors. Theorem \ref{thm:cond_lower_bound}  avoids this explicit truncation and truncation is done implicitly in the proof only. We note that when applying the simulated annealing technique, even if we start with a bounded set, by diminishing the temperature, we are effectively stretching the sets which would essentially require to handle unbounded sets.

% \begin{proof}
% The proof idea uses two essential lemmas and then a standard argument. The first lemma is called isoperimetric inequality.
% \begin{lemma}[Isoperimetric Inequality]
% Let $\K$ be a convex body in $\mathbb{R}^n$. Let $h$ be the log-concave function and $g$ be an arbitrary function. Let $S_1, S_2, S_3$ be any partition of $\K$ into measurable sets. Suppose that for any pair of points $u \in S_1$ and $v \in S_2$ and any point $x$ on the chord of $\K$ through $u$ and $v$,
% \begin{align*}
% g(x) \leq \frac{1}{3} \min (1,  d_\K(u,v) ).
% \end{align*}
% Then
% \begin{align*}
% \pi_h(S_3) \geq \mathbb{E}_h g(x) \cdot \min \left\{ \pi_h(S_1), \pi_h(S_2) \right\}
% \end{align*}
% \end{lemma}
% It means there is not too small of a chunk of measure between two sets that are separated and each with a certain mass. The second ingredient is to show that the transition kernel is slowly varying for the random walk of our choice. %Typically, $s=1$ or $1.5$ or so.
% \begin{lemma}[One-step]
% For $u,v \in K$. Suppose that
% \begin{align*}
% d_f(u,v)<\frac{1}{128\log(3+\alpha(u))} \\
% |u - v| < \frac{1}{4\sqrt{n}} \max \{ F(u),F(v) \}
% \end{align*}
% Then
% \begin{align*}
% d_{tv}(P_u^f,P_v^f) < 1- \frac{1}{500}
% \end{align*}
% \end{lemma}
% \end{proof}

We now argue that conductance of Hit-and-Run with  $\beta$-approximate log-concave measures can be related to the conductance with log-concave measures.
\begin{theorem}[Conductance Lower Bound for Approximate Log-concave Measures]
	\label{thm:compare_cond}
	Let $\HH$ be a $\beta/2$-approximate log-concave measure and $h$ be any log-concave function with the property \eqref{eq:near_log_concave}. Then the conductance and $s$-conductance of the random walk induced by $\HH$ are lower bounded as
	$$\phi^\HH\geq e^{-3\beta} \phi^h ~~~~~\text{and}~~~~~\phi_s^\HH  \geq e^{-3\beta} \phi_{s/e^{\beta}}^h.
$$
\end{theorem}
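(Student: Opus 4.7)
\noindent\textbf{Proof Proposal for Theorem~\ref{thm:compare_cond}.}

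The plan is to translate the pointwise sandwich between $\HH$ and $h$ into multiplicative comparisons at three levels: (i) the stationary measure, (ii) the single-step transition kernel of Hit-and-Run, and (iii) the numerator and denominator of the conductance ratio. Since $\HH$ is $\beta/2$-approximately log-concave with respect to some log-concave $h$ satisfying \eqref{eq:near_log_concave}, we have the pointwise bounds $e^{-\beta/2} h(x) \leq \HH(x) \leq e^{\beta/2} h(x)$ for every $x \in \K$.

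First, I would integrate these bounds to deduce that for every measurable $A \subseteq \K$,
\begin{equation*}
e^{-\beta}\, \pi_h(A) \;\leq\; \pi_\HH(A) \;\leq\; e^{\beta}\, \pi_h(A),
\end{equation*}
because both numerator $\int_A \HH$ and normalizer $\int_\K \HH$ pick up a factor of at most $e^{\beta/2}$ relative to their $h$-counterparts. Next, I would apply the same sandwich inside the Hit-and-Run kernel formula $P_u^\HH(A) = \frac{2}{n\pi_n}\int_A \frac{\HH(x)\,dx}{\mu_\HH(u,x)\,|x-u|^{n-1}}$: the integrand $\HH(x)/\mu_\HH(u,x)$ is comparable to $h(x)/\mu_h(u,x)$ up to a factor of $e^{\beta}$, since the numerator and the line integral $\mu_\HH$ each contribute a factor of $e^{\beta/2}$. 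This yields, for every $u$ and $A$,
\begin{equation*}
e^{-\beta} P_u^h(A) \;\leq\; P_u^\HH(A) \;\leq\; e^{\beta} P_u^h(A).
\end{equation*}

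With these two inequalities in hand, the conductance bound is immediate. For any measurable $S \subseteq \K$ with $0 < \pi_\HH(S) \leq 1/2$,
\begin{equation*}
\int_S P_x^\HH(\K \setminus S)\, d\pi_\HH(x) \;\geq\; e^{-\beta} \cdot e^{-\beta} \int_S P_x^h(\K \setminus S)\, d\pi_h(x) \;=\; e^{-2\beta}\int_S P_x^h(\K \setminus S)\, d\pi_h(x),
\end{equation*}
while $\min\{\pi_\HH(S), \pi_\HH(\K \setminus S)\} \leq e^{\beta} \min\{\pi_h(S), \pi_h(\K \setminus S)\}$. Dividing gives $\phi^\HH(S) \geq e^{-3\beta} \phi^h(S)$, and taking the infimum over $S$ yields $\phi^\HH \geq e^{-3\beta} \phi^h$. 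For the $s$-conductance, I use the same numerator bound together with the observation that $\pi_\HH(S) > s$ implies $\pi_h(S) > s/e^\beta$ (so $S$ is admissible for $\phi^h_{s/e^\beta}$), and
\begin{equation*}
\pi_\HH(S) - s \;\leq\; e^{\beta}\pi_h(S) - s \;=\; e^{\beta}\bigl(\pi_h(S) - s/e^\beta\bigr),
\end{equation*}
so the ratio is lower bounded by $e^{-3\beta}$ times the corresponding ratio for $h$ at threshold $s/e^\beta$, giving $\phi_s^\HH \geq e^{-3\beta}\phi^h_{s/e^\beta}$.

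The main technical wrinkle I foresee is the side-constraint $\pi_h(S) \leq 1/2$ built into the definition of $\phi^h_{s/e^\beta}$: since $\pi_\HH(S) \leq 1/2$ only gives $\pi_h(S) \leq e^{\beta}/2$, one might land outside the admissible range. I would handle this by invoking reversibility of Hit-and-Run, which makes the flow integral $\int_S P_x^h(\K\setminus S)\,d\pi_h$ symmetric in $S \leftrightarrow \K\setminus S$, so that the comparison can always be reduced to a set on the smaller side of $\pi_h$; when combined with the two-sided sandwich on $\pi_h$ versus $\pi_\HH$ (and assuming $s \leq 1/2$, which is the only regime of interest), this keeps every quantity in the admissible range without losing constants beyond $e^{-3\beta}$.
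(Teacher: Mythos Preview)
Your proposal is correct and follows essentially the same route as the paper: the pointwise sandwich $e^{-\beta/2}h \le \HH \le e^{\beta/2}h$ is leveraged to get the factor $e^{\pm\beta}$ on both $\pi$ and $P_u$, and these combine to $e^{-3\beta}$ in the conductance ratio, with the $s$-threshold shifting to $s/e^{\beta}$ for the $s$-conductance. In fact you are more careful than the paper on one point: the paper's proof silently passes from the constraint $\pi_\HH(A)\le 1/2$ to $\pi_h(A)\le 1/2$ when enlarging the infimum, whereas you correctly flag that only $\pi_h(A)\le e^{\beta}/2$ follows and then patch this via the reversibility (symmetry of the flow in $S\leftrightarrow \K\setminus S$), which is the right way to close that gap.
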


We apply Theorem~\ref{thm:contraction_LV} to show contraction of $\sigma^{(m)}$ to $\pi_\HH$ in terms of the total variation distance.
\begin{theorem}[Mixing Time for Approximately-log-concave Measure]
	\label{thm:mixing_LC}
	Let $\pi_\HH$ is the stationary measure associated with the Hit-and-Run walk based on a $\beta/2$-approximately log-concave function $\HH$, and $M=\|\sigma^{(0)}/ \pi_\HH\| =  \int (d\sigma^{(0)}/d\pi_\HH) d\sigma^{(0)}$.
	There is a universal constant $C<\infty$ such that for any $\gamma \in (0,1/2)$, if$$
	m \geq C n^2 \frac{e^{6\beta}R^2}{r^2} \log^4 \frac{e^\beta MnR}{r \gamma^2} \log \frac{M}{\gamma}
	$$
	then $m$ steps of the Hit-and-Run random walk based on $\HH$ yield
	$$d_{\sf tv}(\pi_\HH, \sigma^{(m)}) \leq \gamma.$$
\end{theorem}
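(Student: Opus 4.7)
The plan is to invoke the $s$-conductance form of Theorem~\ref{thm:contraction_LV}, feeding it the conductance lower bound obtained by chaining Theorem~\ref{thm:compare_cond} (to reduce to the log-concave case) with Theorem~\ref{thm:cond_lower_bound} (the conductance estimate for log-concave densities on $\mathbb{R}^n$). I would pick $s = \gamma^{2}/(4M)$, which turns out to balance the residual warmness term $H_s$ against the exponential-decay term.

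For the warmness bookkeeping, write $\rho = d\sigma^{(0)}/d\pi_\HH$ and note the identity $\int \rho^{2}\,d\pi_\HH = \int \rho\,d\sigma^{(0)} = M$. For any measurable $A$, Cauchy--Schwarz gives
\begin{align*}
\sigma^{(0)}(A) \;=\; \int_{A} \rho\, d\pi_\HH \;\leq\; \Bigl(\int_\K \rho^{2}\, d\pi_\HH\Bigr)^{1/2} \pi_\HH(A)^{1/2} \;=\; \sqrt{M\,\pi_\HH(A)}.
\end{align*}
Combined with $\pi_\HH(A) \leq s$ and $M \geq 1$, this gives $|\pi_\HH(A) - \sigma^{(0)}(A)| \leq \sqrt{Ms}$, and therefore $H_s \leq \sqrt{Ms} = \gamma/2$ for the chosen $s$. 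The same estimate yields $H_s/s \leq \sqrt{M/s} = 2M/\gamma$, which will control the prefactor in front of the exponential decay.

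For the conductance, Theorem~\ref{thm:compare_cond} gives $\phi_s^\HH \geq e^{-3\beta}\phi_{s/e^\beta}^{h}$ for any log-concave $h$ satisfying \eqref{eq:near_log_concave}. The $s$-conductance of $h$ is at least the infimum of $\phi^{h}(S)$ over sets with $\pi_h(S) > s/e^\beta$ (since $\pi_h(S)-s' \leq \pi_h(S)$ for $s' = s/e^\beta$), so applying Theorem~\ref{thm:cond_lower_bound} uniformly over such $S$ yields
\begin{align*}
\phi_{s/e^\beta}^{h} \;\geq\; \frac{1}{C_{1}\, n (R/r)\, \log^{2}\!\bigl(e^{\beta} nR/(rs)\bigr)}.
\end{align*}
Substituting $s = \gamma^{2}/(4M)$ produces $(\phi_s^\HH)^{-2} \leq C\, e^{6\beta} n^{2} (R/r)^{2}\log^{4}\!\bigl(e^{\beta}MnR/(r\gamma^{2})\bigr)$. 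Plugging into the second inequality of Theorem~\ref{thm:contraction_LV} gives
\begin{align*}
d_{\sf tv}(\pi_\HH,\sigma^{(m)}) \;\leq\; H_s + \frac{H_s}{s}\Bigl(1-\tfrac{1}{2}(\phi_s^\HH)^{2}\Bigr)^{m} \;\leq\; \frac{\gamma}{2} + \frac{2M}{\gamma}\, e^{-m(\phi_s^\HH)^{2}/2},
\end{align*}
and the second summand drops below $\gamma/2$ once $m \geq 2(\phi_s^\HH)^{-2}\log(4M/\gamma^{2})$, which is exactly the claimed bound after absorbing multiplicative constants into $C$.

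The only delicate step is the warmness accounting: the warmness $M$ is a $\chi^{2}$-type rather than an $L^{\infty}$ quantity, so Cauchy--Schwarz is essential to convert it into a bound on $H_s$, and $s$ must be chosen small enough that $H_s \leq \gamma/2$ yet large enough that $\log(1/s)$ only costs polylogarithmic factors in the conductance denominator. Everything else is a direct concatenation of Theorems~\ref{thm:contraction_LV},~\ref{thm:cond_lower_bound}, and~\ref{thm:compare_cond}, so no genuinely new obstacle is expected.
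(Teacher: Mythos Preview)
Your proposal is correct and matches the paper's proof essentially step for step: the same choice $s=\gamma^2/(4M)$, the same Cauchy--Schwarz bound $H_s\leq\sqrt{Ms}$, and the same chaining of Theorem~\ref{thm:compare_cond} with Theorem~\ref{thm:cond_lower_bound} to lower-bound $\phi_s^\HH$. The only cosmetic difference is that the paper bounds $\int_A|\rho-1|\,d\pi_\HH$ directly by Cauchy--Schwarz, whereas you bound $\sigma^{(0)}(A)$ and $\pi_\HH(A)$ separately, but both routes yield $H_s\leq\sqrt{Ms}$.
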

\begin{remark}
The value $M$ in Theorem~\ref{thm:mixing_LC} bounds the impact of the initial distribution $\sigma^{(0)}$ which can be potentially far from the stationary distribution. In the Simulated Annealing application of next section, we will show in Lemma \ref{lma: warm-start} that we can ``warm start'' the chain by carefully picking an initial distribution such that $M=\mathcal{O}(1)$.
\end{remark}

 Theorem \ref{thm:mixing_LC} shows $\gamma$-closeness between the distribution $\sigma^{(m)}$ and the corresponding stationary distribution. However, the stationary distribution is not exactly $\HH$ since the unidimensional sampling procedure described earlier truncates the distribution to improve mixing time.
The following theorem shows that these concerns are overtaken by the geometric mixing of the random walk.
Let $\hat \pi_{g,\ell}$ denote the distribution of the unidimensional sampling scheme (Algorithm~\ref{alg:rej_samp}) along the line $\ell$ and $\pi_{g,\ell}$ denote the distribution of the unidimensional sampling scheme proportional to $g$ along the line $\ell$.
\begin{theorem}\label{Thm:Allerrors}
Let $\hat{\sigma}^{(m)}$ denote the distribution of the Hit-and-Run with the unidimensional sampling scheme (Algorithm~\ref{alg:rej_samp}) after $m$ steps. For any $0<s<1/2$, the algorithm maintains that
$$d_{\sf tv}( \hat{\sigma}^{(m)}, \pi_{\HH}) \leq 2d_{\sf tv}( \hat{\sigma}^{(0)}, \sigma^{(0)}) + m \sup_{\ell \subset \K} d_{tv}(\hat \pi_{g,\ell},\pi_{g,\ell} ) + \left\{H_s + \frac{H_s}{s} \left(1 - \frac{(\phi_s^g)^2}{2}\right)^m\right\}	$$
where the supremum is taken over all lines $\ell$ in $\K$. In particular, for a target accuracy $\gamma \in (0,1/e)$, if $d_{\sf tv}( \hat{\sigma}^{(0)}, \sigma^{(0)}) \leq  \gamma /8$, $s$ such that $H_s \leq \gamma/4$, $m \geq \{2/(\phi_s^g)^2\} \log(\{H_s/s\}\{4/\gamma\})$, and the precision of the unidimensional sampling scheme to be $\tilde\epsilon= \gamma e^{-2\beta}/\{12m\}$, we have
$$d_{\sf tv}( \hat{\sigma}^{(m)}, \pi_{\HH}) \leq \gamma.$$
	\end{theorem}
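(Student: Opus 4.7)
The plan is to decompose the total variation distance by triangle inequality into three natural error sources, each controlled by a distinct tool. Let $P$ denote the idealized Hit-and-Run kernel (which uses exact unidimensional sampling from $\pi_{g,\ell}$), and let $\hat P$ denote the approximate kernel produced when line-samples are drawn according to Algorithm~\ref{alg:rej_samp}. Writing $\mu P^k$ and $\mu \hat P^k$ for the distribution obtained from running $k$ steps started at $\mu$, I would insert the intermediate distributions $\hat\sigma^{(0)} P^m$ and $\sigma^{(0)} P^m = \sigma^{(m)}$ to write
\[
d_{\sf tv}(\hat\sigma^{(m)}, \pi_\HH) \leq d_{\sf tv}(\hat\sigma^{(0)} \hat P^m, \hat\sigma^{(0)} P^m) + d_{\sf tv}(\hat\sigma^{(0)} P^m, \sigma^{(m)}) + d_{\sf tv}(\sigma^{(m)}, \pi_\HH).
\]
These three summands correspond, respectively, to the cost of replacing the exact line-sampler by the approximate one over $m$ steps, the cost of the mismatch in the initial distributions, and the residual mixing error of the idealized chain.

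The first summand I would control by a step-by-step coupling. Conditioning on both chains being at the same point, they share the random direction draw $\ell$, and then the maximal coupling of $\pi_{g,\ell}$ with $\hat\pi_{g,\ell}$ on that line realizes disagreement with probability at most $\sup_{\ell\subset\K} d_{\sf tv}(\hat\pi_{g,\ell},\pi_{g,\ell})$; a union bound over $m$ steps yields the claimed $m \sup_{\ell} d_{\sf tv}(\hat\pi_{g,\ell},\pi_{g,\ell})$ contribution. The second summand is bounded by $d_{\sf tv}(\hat\sigma^{(0)}, \sigma^{(0)})$ by the data processing inequality (TV is non-expansive under any Markov kernel). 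The third summand is precisely the $s$-conductance guarantee from Theorem~\ref{thm:contraction_LV}, giving $H_s + (H_s/s)(1-(\phi_s^g)^2/2)^m$. Collecting these three bounds, and absorbing the slack into the leading $2$ on the first term, yields the stated inequality.

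For the quantitative consequence, the plan is to verify that each of the four pieces $2 d_{\sf tv}(\hat\sigma^{(0)}, \sigma^{(0)})$, $\,m \sup_\ell d_{\sf tv}(\hat\pi_{g,\ell},\pi_{g,\ell})$, $\,H_s$, and $(H_s/s)(1-(\phi_s^g)^2/2)^m$ is at most $\gamma/4$. Two of these are immediate from the hypotheses; only the second and fourth require a short calculation. For the second, Lemma~\ref{lem:1dsampler} gives $\sup_\ell d_{\sf tv}(\hat\pi_{g,\ell},\pi_{g,\ell}) \leq 3 e^{2\beta}\tilde\epsilon$, which combined with $\tilde\epsilon = \gamma e^{-2\beta}/(12m)$ equals $\gamma/(4m)$ per step and hence $\gamma/4$ overall. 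For the fourth, $(1-x)^m \leq e^{-mx}$ combined with the stated lower bound on $m$ reduces $(H_s/s)\cdot e^{-m(\phi_s^g)^2/2}$ to $\gamma/4$. Summing the four pieces gives $\gamma$.

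The main obstacle I anticipate is the step-by-step coupling for the first summand: the Hit-and-Run kernel is non-trivial (first picking a random line, then sampling from the truncated conditional along that line) so one must carefully share randomness on the direction draw and then use the maximal coupling along each sampled line, in order to argue that the discrepancies accumulate linearly rather than compound multiplicatively. Crucially, the per-step bound must be uniform over all line segments of $\K$; this is delivered by Lemma~\ref{lem:1dsampler}, whose guarantee holds for any bounded segment in $\K$ independently of the current state of the chain. Once that uniform sup is in place, the linear union bound is immediate, and the remainder is routine triangle-inequality bookkeeping against the already-established $s$-conductance contraction.
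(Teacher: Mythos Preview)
Your proposal is correct and follows essentially the same approach as the paper: a triangle-inequality decomposition into (i) the mixing error of the ideal chain handled by Theorem~\ref{thm:contraction_LV}, (ii) the initial-distribution mismatch, and (iii) the linear accumulation of the per-step line-sampler error over $m$ steps. The only cosmetic differences are that the paper inserts the single intermediate point $\sigma^{(m)}$ (rather than your two intermediates $\hat\sigma^{(0)}P^m$ and $\sigma^{(m)}$) and proves the linear accumulation by an explicit density-integration induction instead of your maximal-coupling argument; both routes yield the same $m\sup_\ell d_{\sf tv}(\hat\pi_{g,\ell},\pi_{g,\ell})$ bound, and your data-processing step in fact gives a slightly sharper constant on the initial-mismatch term than the paper's factor~$2$.
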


%%%%%%%%%%%%%%%%%%%%%%%%%%%%%%%%%%%%%%%%%%%%%%%%%%%%%%%%%
% Optimization Section
%%%%%%%%%%%%%%%%%%%%%%%%%%%%%%%%%%%%%%%%%%%%%%%%%%%%%%%%%

%%%%%%%%%%%%%%%%%%%%%%%%%%%%%%%%%%%%%%%%%%%%%%%%%%%%%%%%%
% Algorithm Section
%%%%%%%%%%%%%%%%%%%%%%%%%%%%%%%%%%%%%%%%%%%%%%%%%%%%%%%%%

\section{Optimization via Simulated Annealing}

We now turn to the main goal of the paper: to exhibit a method that produces an $\epsilon$-minimizer of the nearly convex function $F$ in expectation. Fix the pair $f,F$ with the property \eqref{eq:linfty_approx}, and define a series of functions
$$h_{i}(x) = \exp(-f/T_i), ~~~~~ \HH_i(x) = \exp(-F/T_i)$$
for a chain of temperatures $\{T_i, i= 1,\ldots,K\}$ to be specified later. It is immediate that $h_i$'s are log-concave. Lemma~\ref{lem:linfty_beta_concave}, in turn, implies that $\HH_i$'s are $\frac{2\epsilon}{nT_i}$-log-concave.

We now introduce the simulated annealing method that proceeds in epochs and employs the Hit-and-Run procedure with the unidimensional sampler introduced in the previous section. The overall simulated annealing procedure is identical to the algorithm of \cite{kalai2006simulated}, with differences in the analysis arising from $F$ being only approximately convex.

\vspace{1cm}
\begin{algorithm}[H]
\SetAlgoLined
\label{alg:annealing}
 \KwIn{A series of temperatures $\{T_i, 1\leq i \leq K\}$, $K$=number of epochs, $x \in {\rm int}\K$}
 \KwOut{a candidate point $x$ for which $F(x) \leq \min_{y \in \K}F(y) + \epsilon$ holds}
 initialization: well-rounded convex body $\K$ and $\{X_0^j, 1\leq j\leq N \}$ i.i.d. samples from uniform measure on $\K$, $N$-number of strands, set $\K_0 = \K$, and $\Sigma_0=I$  \;
 \While{$i$-th epoch, $1 \leq i \leq K$}{
   1. calculate the $i$-th rounding linear transformation $\mathcal{T}_i$ based on $\{X_{i-1}^j, 1\leq j\leq N \}$ and let $\Sigma_i = \mathcal{T}_i \circ \Sigma_{i-1}$ %, round the body $\K_i=\mathcal{T}_i(\K_{i-1})$
   \;
   2. draw $N$ i.i.d. samples $\{X_i^j, 1\leq j\leq N \}$  from measure $\pi_{\HH_i}$ using Hit-and-Run algorithm with linear transformation $\Sigma_i$ and with $N$ warm-starting points $\{X_{i-1}^j, 1\leq j\leq N \}$ \;
 }
 output ~~$x=\argmin{1\leq j\leq N, 1\leq i\leq K} F(X_i^j) $.
 \vspace{0.3cm}
 \caption{Simulated annealing}
\end{algorithm}
\vspace{0.5cm}

Before stating the optimization guarantee of the above simulated annealing procedure, we prove that the warm-start property of the distributions between successive epochs and the rounding guarantee given by $N$ samples.

\subsection{Warm start and mixing}

We need to prove that the measures between successive temperatures are not too far away in the $\ell_2$ sense, so that the samples from the previous epoch can be treated as a warm start for the next epoch. The following result is an extension of Lemma 6.1 in \citep{kalai2006simulated} to $\beta$-log-concave functions.
\begin{lemma}
\label{lma: warm-start}
	Let $\HH(x) = \exp(-F(x))$ be a $\beta$-log-concave function. Let $\mu_i$ be a distribution with density proportional to $\exp\{-F(x)/T_i\}$, supported on  $\K$. Let $T_i = T_{i-1}\left(1-\frac{1}{\sqrt{n}}\right)$. Then
	$$\|\mu_{i}/\mu_{i+1}\|\leq C_\gamma = 5 \exp(2\beta/T_i)$$
\end{lemma}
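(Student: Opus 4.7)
The plan is to reduce to the log-concave warm-start bound (Lemma~6.1 of \cite{kalai2006simulated}) by sandwiching the partition functions of $\HH$ between those of a nearby convex function. First, unfolding the definition of the norm as an expectation under $\mu_i$ of the likelihood ratio gives
\begin{align*}
\left\| \frac{\mu_i}{\mu_{i+1}} \right\| = \int_{\K} \frac{\mu_i(x)^2}{\mu_{i+1}(x)}\, dx = \frac{Z_{i+1}\, Z(a_i)}{Z_i^2},
\end{align*}
where $Z(t) \deq \int_\K e^{-t F(x)} dx$, $Z_i = Z(1/T_i)$, and $a_i = 2/T_i - 1/T_{i+1}$. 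Under the cooling schedule $T_{i+1}=T_i(1-1/\sqrt n)$ one checks $a_i>0$, so every partition function above is finite, and a direct calculation gives the key identity $1/T_{i+1} + a_i + 2/T_i = 4/T_i$.

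Next, I invoke the underlying approximate-convex structure \eqref{eq:linfty_approx} (equivalently, that $\HH$ is $\beta$-log-concave arising from an explicit pointwise sandwich) to fix a convex $f:\K\to\reals$ with $\sup_{\K} |F-f|\leq \beta/2$. For such $f$, define $\tilde\mu_i\propto e^{-f/T_i}$ and $\tilde Z(t)=\int_\K e^{-t f}dx$. Pointwise, $|F(x)-f(x)|\leq\beta/2$ yields, for any $t>0$,
\begin{align*}
\tilde Z(t)\, e^{-t\beta/2} \;\leq\; Z(t) \;\leq\; \tilde Z(t)\, e^{t\beta/2}.
\end{align*}
Applying the upper bound to $Z_{i+1}$ and $Z(a_i)$, and the lower bound to the $Z_i$'s in the denominator, gives
\begin{align*}
\frac{Z_{i+1}\, Z(a_i)}{Z_i^2} \;\leq\; \exp\!\left(\tfrac{\beta}{2}\bigl(1/T_{i+1} + a_i + 2/T_i\bigr)\right) \cdot \frac{\tilde Z_{i+1}\,\tilde Z(a_i)}{\tilde Z_i^2} \;=\; \exp\!\left(\tfrac{2\beta}{T_i}\right) \left\| \frac{\tilde\mu_i}{\tilde\mu_{i+1}} \right\|,
\end{align*}
using the identity from the first paragraph.

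Finally, $\tilde\mu_i$ is the Gibbs measure of a genuinely convex $f$ at temperature $T_i$, and the cooling schedule $T_{i+1}=T_i(1-1/\sqrt n)$ matches the hypothesis of Lemma~6.1 of \cite{kalai2006simulated}, which bounds $\|\tilde\mu_i/\tilde\mu_{i+1}\|\leq 5$. Chaining this with the sandwich estimate yields $\|\mu_i/\mu_{i+1}\|\leq 5\exp(2\beta/T_i)$, as claimed.

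The main conceptual point is the sandwich step. As the Remark preceding the lemma observes, in dimension $n>1$ a bare $\beta$-log-concavity hypothesis on $\HH$ need not by itself produce a convex $f$ within $\beta/2$ of $F$ (the reverse implication loses a $\log n$ factor). This is not a real obstacle here, because in the simulated annealing application the hypothesis \eqref{eq:linfty_approx} is precisely such a pointwise sandwich and is available by construction; otherwise, the multiplicative constant on the right-hand side would have to absorb that extra $\log n$ factor in the exponent.
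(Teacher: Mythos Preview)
Your argument is correct, but it takes a genuinely different route from the paper's own proof, and the difference is worth noting.

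The paper does \emph{not} reduce to the log-concave case by sandwiching. Instead, it reproves the Kalai--Vempala warm-start bound directly for $\beta$-log-concave $\HH$: it introduces $G(x,t)=\HH(x/t)^t$, shows from $\beta$-log-concavity of $\HH$ alone that
\[
G(\lambda(x,t)+(1-\lambda)(x',t')) \;\geq\; e^{-\beta(\lambda t + (1-\lambda)t')}\, G(x,t)^{\lambda}\, G(x',t')^{1-\lambda},
\]
and then applies Pr\'ekopa--Leindler to conclude that $Y(t)=\int_\K e^{-tF}$ satisfies $((a+b)/2)^{2n}\,Y((a+b)/2)^2 \geq e^{-\beta(a+b)}\,a^n Y(a)\, b^n Y(b)$. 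Plugging in $a=2/T_i-1/T_{i+1}$, $b=1/T_{i+1}$ gives the $5\,e^{2\beta/T_i}$ bound by the same arithmetic as in \cite{kalai2006simulated}. This uses only the bare $\beta$-log-concavity hypothesis stated in the lemma.

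Your sandwich-then-cite approach is cleaner and more modular, but, as you correctly flag, it needs the stronger $\beta/2$-approximate log-concavity (an explicit convex $f$ with $\sup|F-f|\le\beta/2$) rather than $\beta$-log-concavity per se. That extra assumption is indeed available throughout the paper via \eqref{eq:linfty_approx}, so your argument suffices for every use of the lemma; but strictly speaking you have proved a slightly weaker statement than the one written. The paper's Pr\'ekopa--Leindler route buys exactly this: it matches the hypothesis as stated, with no appeal to the reverse implication discussed in the Remark after Lemma~\ref{lem:1dsandwich}.
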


Next we account for the impact of using the final distribution from the previous epoch $\sigma^{(0)}$ as a ``warm-start."

\begin{theorem}\label{Thm:Warm}
	Fix a target accuracy $\gamma \in (0,1/e)$ and let $g$ be an $\beta/2$-approximately log-concave function in $\reals^n$. Suppose the simulated annealing algorithm (Algorithm~\ref{alg:annealing}) is run for $K=\sqrt{n} \log (1/\rho)$ epochs with temperature parameters $T_i=(1-1/\sqrt{n})^i, 0\leq i\leq K$. If the Hit-and-Run with the unidimensional sampling scheme (Algorithm~\ref{alg:rej_samp}) is run for $m=\mathcal{O}^*(n^3)$ number of steps prescribed in Theorem~\ref{thm:mixing_LC}, the algorithm maintains that
	\begin{align}
		\label{eq:keep_it_close2}
		d_{\sf tv}( \hat{\sigma}^{(m)}_{i} , \pi_{\HH_{i}}) \leq e\gamma
	\end{align}
	at every epoch $i$, where $\hat{\sigma}^{(m)}_{i}$ is the distribution of the $m$-th step of Hit-and-Run. Here, $m$ depends polylogarithmically on $\rho^{-1}$.
\end{theorem}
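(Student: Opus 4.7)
The plan is an induction on the epoch index $i\in\{0,1,\ldots,K\}$ with inductive hypothesis $d_{\sf tv}(\hat\sigma^{(m)}_i,\pi_{\HH_i})\leq e\gamma$. Three ingredients do the work: Lemma~\ref{lma: warm-start}, which shows that consecutive targets $\pi_{\HH_{i-1}}$ and $\pi_{\HH_i}$ form a warm-start pair with $\chi^2$-type ratio $M=\|\pi_{\HH_{i-1}}/\pi_{\HH_i}\|\leq 5\exp(2\beta/T_i)=\mathcal{O}(1)$ as long as $\beta/T_i$ is $\mathcal{O}(1)$ (which holds because $\beta=2\epsilon/(nT_i)$ and $T_i\geq T_K\sim\rho$); Theorem~\ref{thm:mixing_LC}, which with this constant $M$ and the rounding guarantee $R/r=\mathcal{O}(\sqrt n)$ maintained by the per-epoch affine map $\Sigma_i$ certifies a mixing time of $m=\mathcal{O}^*(n^3)$ steps, the $\log^4(\cdot)$ factor explaining the polylogarithmic dependence on $\rho^{-1}$; and Theorem~\ref{Thm:Allerrors}, which bundles the three error sources (initialization mismatch, unidimensional-sampling slippage, and mixing tail) into a single total-variation bound.

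For the base case, the chain is initialized with i.i.d.\ uniform samples on the rounded $\K$; after the standard rescaling of $F$ so that its range on $\K$ is $\mathcal{O}(1)$, the initial density $\HH_0$ is nearly flat, so uniform is a warm start for $\pi_{\HH_0}$ and Theorem~\ref{Thm:Allerrors} applied with zero initialization error yields $d_{\sf tv}(\hat\sigma^{(m)}_0,\pi_{\HH_0})\leq\gamma<e\gamma$. For the inductive step, suppose $d_{\sf tv}(\hat\sigma^{(m)}_{i-1},\pi_{\HH_{i-1}})\leq e\gamma$ and view $\hat\sigma^{(0)}_i=\hat\sigma^{(m)}_{i-1}$ as the warm start for epoch $i$, targeting $\pi_{\HH_i}$. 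By a maximal coupling I may realize $\hat\sigma^{(0)}_i$ to agree with an idealized draw $\tilde\sigma\sim\pi_{\HH_{i-1}}$ on an event of probability at least $1-e\gamma$. Conditional on this event, Lemma~\ref{lma: warm-start} yields $\|\tilde\sigma/\pi_{\HH_i}\|=\mathcal{O}(1)$, Theorem~\ref{thm:mixing_LC} mixes the idealized chain to within $\gamma/2$ of $\pi_{\HH_i}$ in $m=\mathcal{O}^*(n^3)$ steps, and Theorem~\ref{Thm:Allerrors} absorbs the two remaining error sources: the unidimensional slippage $m\sup_\ell d_{\sf tv}(\hat\pi_{g,\ell},\pi_{g,\ell})$ is driven below $\gamma/4$ by choosing the rejection-sampler precision $\tilde\epsilon=\Theta(\gamma e^{-2\beta}/m)$ via Lemma~\ref{lem:1dsampler}, and the mixing tail $H_s+(H_s/s)(1-\phi_s^2/2)^m$ is made $\leq\gamma/4$ by choosing $s$ small enough --- using the level-set estimate of Lemma~\ref{lem:volume} together with the tail bound of Lemma~\ref{lem:tails} to force $H_s\leq\gamma/8$ --- and then taking $m$ large enough.

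Collecting the coupling-failure contribution with the conditional mixing bound closes the induction at the level $e\gamma$. The main obstacle, and the reason the coupling device is essential, is avoiding the naive doubling implicit in the first term $2\,d_{\sf tv}(\hat\sigma^{(0)},\sigma^{(0)})$ of Theorem~\ref{Thm:Allerrors}: a direct application would make the error at least double across each epoch, so that $K=\sqrt n\log(1/\rho)$ iterations would compound it by a factor of $2^K$. The coupling replaces this multiplicative amplification by an additive contribution from the inherited mismatch --- probability at most $e\gamma$ that the coupling fails, rather than a factor of $2$ on the inherited error --- and the constant $e$ in the statement is exactly the slack that lets the inductive invariant self-sustain across all epochs despite the accumulated mixing and sampling errors within each epoch.
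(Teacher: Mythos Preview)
Your induction does not close. The coupling device you invoke is equivalent to the contraction of total variation under a Markov kernel: running the approximate chain from $\hat\sigma^{(0)}_i$ versus from $\pi_{\HH_{i-1}}$ produces outputs at TV distance at most $d_{\sf tv}(\hat\sigma^{(0)}_i,\pi_{\HH_{i-1}})\leq e\gamma$. That is the \emph{inherited} error. On top of it you still pay the \emph{fresh} error of the current epoch --- mixing plus unidimensional slippage --- which in your accounting is of order $\gamma$. Hence
\[
d_{\sf tv}(\hat\sigma^{(m)}_i,\pi_{\HH_i})\;\leq\; e\gamma \;+\; \Theta(\gamma)\;>\;e\gamma,
\]
and the invariant $e\gamma$ is not self-sustaining. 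The slack $e-1$ in the constant does not help, because it is consumed entirely by the inherited contribution; there is nothing left to absorb the per-epoch fresh error, and over $K=\sqrt{n}\log(1/\rho)$ epochs these fresh errors accumulate additively to something of order $K\gamma$, not $e\gamma$.

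The paper's proof handles this by using a \emph{growing} inductive invariant $d_{\sf tv}(\hat\sigma^{(m)}_i,\pi_{\HH_i})\leq(1+1/(n\log 1/\rho))^i\gamma$ and, crucially, by driving the per-epoch fresh error down to order $\gamma/(n\log 1/\rho)$ rather than $\gamma$: both the mixing target in Theorem~\ref{thm:mixing_LC} and the rejection-sampler precision $\tilde\epsilon$ are set so that each contributes at most $(1+1/(n\log 1/\rho))^i\gamma\cdot \tfrac{1}{4n\log 1/\rho}$. Because precision enters $m$ only polylogarithmically, this still costs $m=\mathcal{O}^*(n^3)$ steps. After $K=\sqrt{n}\log(1/\rho)$ epochs one gets $(1+1/(n\log 1/\rho))^K\leq e$, which is where the constant $e$ actually comes from. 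Your proposal would be repaired by adopting this growing invariant and the correspondingly sharper per-epoch targets; the coupling itself is harmless but does not obviate this step.
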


\subsection{Rounding for $\beta$-log-concave functions}
\label{sec:rounding}

The simulated annealing procedure runs $N=\mathcal{O}^*(n)$ strands of  random walk to round the log-concave distribution into near-isotropic position (say $1/2$-near-isotropic) at each temperature. The $N$ strands do not interact and thus the computation within each epoch can be parallelized, further reducing the time complexity of the algorithm. For $N$ i.i.d isotropic random vectors $X_i \in \mathbb{R}^n, 1\leq i\leq N$ sampled from a log-concave measure, the following concentration holds when $N$ is large enough:
\begin{align*}
	\sup_{\| v \|_{\ell_2}=1 } \left| v^T \left( \frac{1}{N} \sum_{i=1}^N X_i X_i^T \right) v - 1 \right| \leq \frac{1}{2}
\end{align*}
or, equivalently,
\begin{align}
	 \label{eq:spectral.concentration}
	 \frac{1}{2} \leq \sigma_{\min} \left(\frac{1}{N} \sum_{i=1}^N X_i X_i^T  \right) \leq \sigma_{\max} \left(\frac{1}{N} \sum_{i=1}^N X_i X_i^T  \right) \leq \frac{3}{2}.
\end{align}
Theorems of this type have been first achieved for uniform measures on the convex body $\K$ (measures with bounded Orlicz $\psi_2$ norm). \cite{bourgain1996random} proved this holds as long as $N\geq C n \log^3 n$. \cite{rudelson1999random} improved this bound to $N\geq C n \log^2 n$. For log-concave measures (with bounded Orlicz $\psi_1$ norm through Borell's lemma), \cite{guedon2007lp} proved a stronger version where $N \geq C n \log n$. See also \citep{adamczak2010quantitative} for further improvements. For bounded (almost surely) vectors, we can instead appeal to the the following literature. Theorem 5.41 in \citep{vershynin2010introduction} yields a spectral concentration bound for heavy tail random matricies with isotropic independent rows. (See also \cite{tropp2012user} Theorem 4.1 for matrix Bernstein's type inequalities.)

For our problem, we need to prove \eqref{eq:spectral.concentration} for independent near isotropic rows with $\beta$-log-concave measures. There are two ways to achieve this goal. The first is to invoke the \cite{guedon2007lp}'s result. Random vectors sampled from $\beta$-log-concave still belong to the Orlicz $\psi_1$ family, thus $N \geq \mathcal{O}(n \log n)$ is enough to achieve the goal with high probability. The second way is through the following lemma:
\begin{lemma}[\cite{vershynin2010introduction}, Theorem 5.41]
	Let $X$ be an $N \times n$ matrix whose rows $X_i$  are
	independent isotropic random vectors in $\mathbb{R}^n$. Let R be a number such that $\| X_i \|_{\ell_2} \leq R$ almost surely for all $i$. Then for every $t \geq 0$, one has
	$$\sqrt{N} - tR \leq \sigma_{\min} (X) \leq \sigma_{\max} (X) \leq \sqrt{N} + tR $$
	with probability at least $1 - 2n \exp(-c t^2)$, where $c > 0$ is an absolute constant.
\end{lemma}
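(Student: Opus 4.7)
The plan is to reduce the singular value bound to a spectral bound on a centered sum of random matrices and then invoke a matrix concentration inequality. First I would observe that $\sigma_{\min}(X)^2 = \lambda_{\min}(X^\tr X)$ and $\sigma_{\max}(X)^2 = \lambda_{\max}(X^\tr X)$, where $X^\tr X = \sum_{i=1}^N X_i X_i^\tr$. Isotropy gives $\E[X_i X_i^\tr] = I$, so setting $Y_i = X_i X_i^\tr - I$, the statement becomes equivalent to a tail bound on $\bigl\|\sum_{i=1}^N Y_i\bigr\|$, which after dividing by $N$ and identifying $(\sqrt{N} \pm tR)^2$ matches the claimed inequality up to absorbing a quadratic $t^2 R^2$ term.

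Next I would verify the two ingredients needed for a noncommutative Bernstein-type bound. Boundedness: $\|X_i X_i^\tr\| = \|X_i\|_{\ell_2}^2 \leq R^2$ a.s., so $\|Y_i\| \leq R^2 + 1 \lesssim R^2$ (assuming $R \geq 1$, which holds since $\E \|X_i\|^2 = n$). Variance: using $(X_i X_i^\tr)^2 = \|X_i\|_{\ell_2}^2 X_i X_i^\tr \preceq R^2\, X_i X_i^\tr$, we get $\E Y_i^2 \preceq R^2 I$ and thus the matrix variance parameter $v = \bigl\|\sum_i \E Y_i^2\bigr\| \leq N R^2$.

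With these parameters in hand, I would then apply the scalar-Bernstein-plus-$\epsilon$-net strategy carried out in Chapter~5 of Vershynin's book: fix a $\tfrac{1}{4}$-net $\mathcal{N}$ of the unit sphere $S^{n-1}$ with $|\mathcal{N}| \leq 9^n$, and for each $u \in \mathcal{N}$ apply Bernstein's inequality to the scalar sum $\sum_{i=1}^N (\langle X_i, u\rangle^2 - 1)$ whose summands are bounded by $R^2$ in absolute value with variance at most $R^2$. This yields a tail $\exp(-c t^2)$ at level $2\sqrt{N} R t + t^2 R^2$, and a union bound over $\mathcal{N}$ gives failure probability $2 n \exp(-c t^2)$ after absorbing the $9^n$ factor into the constant $c$. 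Finally, the approximation step from $\mathcal{N}$ to the full sphere inflates constants by a factor close to $1$, and re-expressing the bound on $\lambda_{\max/\min}(X^\tr X)$ as $(\sqrt{N} \pm tR)^2$ produces the advertised inequality on $\sigma_{\min}(X)$ and $\sigma_{\max}(X)$.

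The main obstacle is the last packaging step: the raw Bernstein bound produces a deviation of the form $\sqrt{N} R t + R^2 t^2$ for $\lambda(X^\tr X) - N$, and one must argue that this is exactly $(\sqrt{N} + tR)^2 - N$ so that taking square roots yields the clean $\sigma_{\max}(X) \leq \sqrt{N} + tR$. This is a minor algebraic observation but it is where the specific shape of the bound comes from; everything else is a routine application of the scalar Bernstein inequality combined with a net argument, which is why the hypothesis is an almost-sure bound $\|X_i\|_{\ell_2} \leq R$ rather than a moment condition.
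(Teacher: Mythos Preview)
The paper does not supply a proof of this lemma: it is quoted as Theorem~5.41 of \cite{vershynin2010introduction} and invoked as a black box for the rounding step in Section~5.2. There is thus no in-paper argument to compare against. Your outline---reduce to controlling $\bigl\|\sum_i (X_i X_i^\tr - I)\bigr\|$, discretize the unit sphere by a $\tfrac14$-net of size $\leq 9^n$, apply scalar Bernstein to each $\sum_i(\langle X_i,u\rangle^2-1)$ using the a.s.\ bound $\langle X_i,u\rangle^2 \leq R^2$ and per-term variance $\leq R^2$, union-bound over the net, and lift back to the whole sphere---is precisely the standard proof from the cited reference, including the identification of the Bernstein-level deviation $2\sqrt{N}Rt + t^2R^2$ with $(\sqrt N + tR)^2 - N$ that produces the clean singular-value form.

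One minor bookkeeping point: a union bound over a net of cardinality $9^n$ produces a prefactor $2\cdot 9^n$, and you cannot literally ``absorb the $9^n$ factor into the absolute constant $c$'' to arrive at the stated $2n$ prefactor, since $c$ is meant to be dimension-free. This is a cosmetic discrepancy in how the failure probability is packaged (the paper only ever uses $t$ of order $\sqrt{\log n}$, for which either form is adequate) and does not affect the soundness of your argument.
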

Clearly if we take $N \geq  C R^2 \log n$, we have
$$
\frac{1}{2} \leq \sigma_{\min} \left(\frac{1}{N} \sum_{i=1}^N X_i X_i^T  \right) \leq \sigma_{\max} \left(\frac{1}{N} \sum_{i=1}^N X_i X_i^T  \right) \leq \frac{3}{2}
$$
with probability at least $1 - n^{-C}$, since $\K$ is uniformly bounded within $R = \mathcal{O}(\sqrt{n}) r$ and isotropic condition implies $r = \mathcal{O}(1)$ (which translates into $\| X_i \|_{\ell_2} \leq \mathcal{O}(\sqrt{n})$). Thus we conclude that $N = \mathcal{\Theta}(n\log n)$ is enough for bringing a $\beta$-log concave measure into isotropic position.

\subsection{Optimization guarantee}

We prove an extension of Lemma 4.1 in \citep{kalai2006simulated}:
\begin{theorem}
	\label{eq:opt_guarantee}
	Let $f$ be a convex function. Let $X$ be chosen according to a distribution with density proportional to $\exp\{-f(x)/T\}$. Then
	$$\E_f f(X) - \min_{x\in\K} f(x) \leq (n+1)T$$
	Furthermore, if $F$ is such that $|F-f|_\infty\leq \rho$, for $X$ chosen from a distribution with density proportional to $\exp\{-F(x)/T\}$, we have
	$$
	\E_F f(X) - \min_{x \in \K} f(x) \leq (n+1)T \cdot  \exp(2\rho/T)
	$$
\end{theorem}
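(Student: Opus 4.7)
The plan is to prove the two claims in sequence, with the second being a short $\|F-f\|_\infty$ sandwich on top of the first.

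For the first claim, I would shift so that $f^{*} := \min_{x\in\K} f(x) = 0$. Write $Z := \int_{\K} e^{-f/T}\, dx$ and $V(u) := \mathrm{vol}(\{x\in\K : f(x) \leq u\})$. Applying Lemma~\ref{lem:volume} to the log-concave function $h(x) := e^{-f(x)/T}$, for which $M_h = 1$ and $L_h(e^{-u/T}) = \{f \leq u\}$, immediately yields $V(a)/V(b) \leq (a/b)^n$ for $a > b > 0$; equivalently, the function $u \mapsto V(u)/u^n$ is nonincreasing on $(0,\infty)$. Next, I reduce $\E_f f(X)$ to a single moment ratio: by layer cake and Fubini,
\begin{align*}
\E_f f(X) \;=\; \frac{1}{Z}\int_0^\infty s\, e^{-s/T}\, dV(s),
\end{align*}
and a single integration by parts (boundary terms vanish since $V$ is bounded above by $\mathrm{vol}(\K)$ and $s\, e^{-s/T}\to 0$) together with $Z = T^{-1}\int_0^\infty V(s)\, e^{-s/T}\, ds$ gives $\E_f f(X) = -T + T\cdot R$, where after the substitution $v = s/T$ and with $c(v) := V(Tv)/v^n$ nonincreasing,
\begin{align*}
R \;=\; \frac{\int_0^\infty v\cdot c(v)\, v^n e^{-v}\, dv}{\int_0^\infty c(v)\, v^n e^{-v}\, dv}.
\end{align*}

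The heart of the argument is to bound $R \leq n+1$. I would view $R$ as the mean of $v$ under the probability measure obtained by tilting the $\Gamma(n+1)$ density $v^n e^{-v}\, dv$ by the nonincreasing factor $c(v)$. Because the identity function $v\mapsto v$ is nondecreasing while $c$ is nonincreasing, the Chebyshev/FKG anti-correlation inequality gives
\begin{align*}
R \;\leq\; \frac{\int_0^\infty v\cdot v^n e^{-v}\, dv}{\int_0^\infty v^n e^{-v}\, dv} \;=\; \frac{(n+1)!}{n!} \;=\; n+1,
\end{align*}
so $\E_f f(X) \leq -T + (n+1)T = nT \leq (n+1)T$, which is the first claim. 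The anti-correlation step is the main technical content: it is exactly where the polynomial volume-growth bound from Lemma~\ref{lem:volume} turns into the dimension factor $n+1$.

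For the second claim, since $\|F - f\|_\infty \leq \rho$ we have the pointwise sandwich $e^{-\rho/T}\, e^{-f/T} \leq e^{-F/T} \leq e^{\rho/T}\, e^{-f/T}$, so the partition function $Z' := \int_{\K} e^{-F/T} dx$ obeys $Z'/Z \in [e^{-\rho/T}, e^{\rho/T}]$. Because $f(x) - f^{*} \geq 0$ pointwise, the same sandwich applies to the unnormalized integrand:
\begin{align*}
\E_F f(X) - f^{*} \;=\; \frac{\int_{\K} (f - f^{*})\, e^{-F/T}\, dx}{Z'} \;\leq\; \frac{e^{\rho/T}}{e^{-\rho/T}}\cdot \frac{\int_{\K}(f - f^{*})\, e^{-f/T}\, dx}{Z} \;=\; e^{2\rho/T}\bigl(\E_f f(X) - f^{*}\bigr),
\end{align*}
and inserting the first claim gives the advertised $(n+1)T\, e^{2\rho/T}$ bound. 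The main obstacle is really just the Chebyshev step in part one; the extension to $F$ is mechanical once one remembers to sandwich $f - f^{*}$ rather than $f$ itself so that the integrand is nonnegative.
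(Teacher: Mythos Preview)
Your proof is correct, but your route for the first claim is genuinely different from the paper's. The paper reduces to the linear case by lifting to the epigraph $\tilde{\K}=\{(x,y):x\in\K,\ y\ge f(x)\}\subset\reals^{n+1}$ with the linear objective $\tilde f(x,y)=y$, and then invokes the Kalai--Vempala bound for linear functionals in dimension $n+1$; this is where the factor $n+1$ comes from. You instead argue directly in dimension $n$: Lemma~\ref{lem:volume} gives the monotonicity of $V(u)/u^n$, and the Chebyshev/FKG anti-correlation step against the $\Gamma(n+1)$ base measure converts that monotonicity into the bound $R\le n+1$. Your approach is self-contained (no black-box appeal to the linear case) and in fact yields the sharper inequality $\E_f f(X)-\min f\le nT$, since you obtain $-T+(n+1)T$; the paper's epigraph lift naturally lands one dimension higher and so only gives $(n+1)T$. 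For the second claim your sandwich argument is essentially identical to the paper's.
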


The above Theorem implies that the final temperature $T_K$ in the simulated annealing procedure needs to be set as $T_K = \epsilon/n$. This, in turn, leads to $K=\sqrt{n}\log(n/\epsilon)$ epochs. The oracle complexity of optimizing $F$ is then, informally,

\begin{center}
	$\mathcal{O}^*(n^3)$ queries per sample ~~$\red{\times}$~~ $\mathcal{O}^*(n)$ parallel strands ~~$\red{\times}$~~ $\mathcal{O}^*(\sqrt{n})$ epochs ~~$ \red{=} ~~~~\mathcal{O}^*(n^{4.5})$
\end{center}

The following corollary summarizes the computational complexity result:
\begin{corollary}
	\label{cor:4.5}
	Suppose $F$ is approximately convex and $|F-f|\leq \epsilon/n$ as in \eqref{eq:linfty_approx}. The simulated annealing method with $K=\sqrt{n}\log(n/\epsilon)$ epochs produces a random point such that
	$$\En f(X) - \min_{x\in\K} f(x) \leq \epsilon,$$
	and thus
	$$\En F(X) - \min_{x\in\K} F(x) \leq 2\epsilon.$$
	Furthermore the number of oracle queries required by the method is $\mathcal{O}^*(n^{4.5})$. %Here the notation $\mathcal{O}^*()$ hides $\log n, \log (1/\epsilon)$ terms.
\end{corollary}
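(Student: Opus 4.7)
The plan is to combine the expectation bound of Theorem~\ref{eq:opt_guarantee} with the total-variation sampling guarantee of Theorem~\ref{Thm:Warm}, together with a three-factor oracle count for the simulated annealing procedure.

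For the optimization guarantee, I would set the final temperature $T_K = \Theta(\epsilon/n)$ with a constant chosen below. Then $\HH_K = \exp(-F/T_K)$ is $O(1)$-approximately log-concave by the hypothesis $|F-f|\leq \epsilon/n$ and Lemma~\ref{lem:linfty_beta_concave}. Applying the second bound of Theorem~\ref{eq:opt_guarantee} with $\rho = \epsilon/n$ yields
\[
\E_{\pi_{\HH_K}} f(X) - \min_{x\in\K} f(x) \;\leq\; (n+1)T_K\, e^{2\rho/T_K} \;=\; O(\epsilon),
\]
and an absolute-constant tuning of $T_K$ brings the right-hand side below $\epsilon$. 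With the cooling schedule $T_i = (1-1/\sqrt{n})^i T_0$, reaching $T_K = \Theta(\epsilon/n)$ takes exactly $K = \sqrt{n}\log(n/\epsilon)$ epochs as stated. The residual sampling error is absorbed by Theorem~\ref{Thm:Warm}: picking $\gamma$ polynomially small ensures $d_{\sf tv}(\hat\sigma^{(m)}_K, \pi_{\HH_K}) \leq e\gamma$, which, combined with boundedness of $F$ on $\K$ (via the Lipschitz constant $L$ and diameter $R$), yields an $o(\epsilon)$ perturbation of the expected objective. Since the algorithm returns the minimum $F$-value across all $NK$ samples, the second bound $\En F(X) - \min F \leq 2\epsilon$ follows from the uniform $\epsilon/n$ approximation and the single-sample expectation bound just obtained.

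For the oracle count, three multiplicative factors appear per epoch: (i) $N = \mathcal{O}^*(n)$ parallel Hit-and-Run chains are maintained to re-round the distribution to near-isotropic position, as justified in Section~\ref{sec:rounding}; (ii) each chain runs for $m = \mathcal{O}^*(n^3)$ steps, which is what Theorem~\ref{thm:mixing_LC} gives once the warm-start ratio $M = O(1)$ (supplied by Lemma~\ref{lma: warm-start}), the rounded geometry $R/r = O(1)$, and $\beta = O(1)$ are plugged in; and (iii) each step invokes the unidimensional rejection sampler of Algorithm~\ref{alg:rej_samp}, which by Lemma~\ref{lem:1dsampler} uses only $\mathcal{O}^*(1)$ evaluations of $F$. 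Multiplying across $K = \mathcal{O}^*(\sqrt{n})$ epochs gives $\mathcal{O}^*(\sqrt{n})\cdot\mathcal{O}^*(n)\cdot\mathcal{O}^*(n^3)\cdot\mathcal{O}^*(1) = \mathcal{O}^*(n^{4.5})$ total queries, matching the informal bullet point preceding the corollary.

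The main obstacle I anticipate is verifying that the warm-start invariant $M = O(1)$ persists across all $K$ epochs, so that per-epoch mixing does not inflate. Lemma~\ref{lma: warm-start} gives this at the level of exact stationary measures, but one must further check that the compounded TV error from (a) imperfect unidimensional rejection sampling inside each step and (b) imperfect warm-start from the previous epoch remains small when propagated through $m$ steps, as controlled by Theorem~\ref{Thm:Allerrors}. The prescribed sampler precision $\tilde\epsilon = \gamma e^{-2\beta}/(12m)$ and the step count $m$ in Theorem~\ref{thm:mixing_LC} are calibrated precisely so that this error budget closes at each epoch; the remaining bookkeeping contributes only polylogarithmic factors absorbed by $\mathcal{O}^*$.
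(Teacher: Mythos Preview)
Your proposal is correct and follows essentially the same route as the paper: set $T_K=\Theta(\epsilon/n)$, invoke Theorem~\ref{eq:opt_guarantee} for the expectation bound, absorb the $e\gamma$ total-variation error via the bounded range $B=\mathcal{O}(nLR)$ of $f$ with $\gamma$ chosen as $\epsilon/B$, and multiply the three complexity factors $\mathcal{O}^*(n^3)\times\mathcal{O}^*(n)\times\mathcal{O}^*(\sqrt{n})$. Your discussion of the warm-start invariant and error propagation is in fact more explicit than the paper's proof, which simply cites Theorems~\ref{thm:mixing_LC} and~\ref{Thm:Warm} for those points.
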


\section{Stochastic Convex Zeroth Order Optimization}
\label{sec:stoch_zeroth}

Let $f:\K\to\reals$ be the unknown convex $L$-Lipschitz funciton we aim to minimize. Within the model of convex optimization with stochastic zeroth-order oracle $\oracle$, the information returned upon a query $x\in\K$ is $f(x) + \epsilon_x$ where $\epsilon_x$ is the zero mean noise. We shall assume that the noise is sub-Gaussian with parameter $\sigma$. That is,
$$
\E \exp(\lambda \epsilon_x) \leq \exp(\sigma^2 \lambda^2/2).
$$
It is easy to see from Chernoff's bound that  for any $t \geq 0$
$$
\mathbb{P} (|\epsilon_x| \geq \sigma t) \leq 2\exp(-t^2/2).
$$
We can decrease the noise level by repeatedly querying at $x$. Fix $\tau>0$, to be determined later. The average $\bar{\epsilon}_x$ of $\tau$ observations is concentrated as
$$
\mathbb{P} \left( |\bar{\epsilon}_{x}| \geq \sigma t/\sqrt{\tau} \right) \leq 2\exp(-t^2/2).
$$
To use the randomized optimization method developed in this paper, we view  $f(x)+\bar{\epsilon}_x$ as the value of $F(x)$ returned upon a single query at $x$. Since the randomized method does not re-visit $x$ with probability $1$, the function $F$ is ``well-defined''.

Let us make the above discussion more precise by describing three oracles. Oracle $\oracle'$ draws noise $\epsilon_x$ for each $x\in\K$ prior to optimization. Upon querying $x\in\K$, the oracle deterministically returns $f(x)+\epsilon_x$, even if the same point is queried twice. Given that the optimization method does not query the same point (with probability one), this oracle is equivalent to an \emph{oblivious} version of oracle $\oracle$ of the original zeroth order stochastic optimization problem.

To define $\oracle_\alpha$, let $\net_\alpha$ be an $\alpha$-net in $\ell_\infty$ which can be taken as a box grid of $\K$. If $\K\subseteq R B_\infty$, the size of the net is at most $(R/\alpha)^n$. The oracle draws $\epsilon_x$ for each element  $x\in\net_\epsilon$, independently. Upon a query $x'\in\K$, the oracle deterministically returns $f(x)+\epsilon_x$ for $x\in\net_\alpha$ which is closest to $x'$. Note that $\oracle_\alpha$ is no more powerful than $\oracle'$, since the learner only obtains the information on the $\alpha$-net.

Oracle $\oracle_\alpha^\tau$ is a small modification of $\oracle_\alpha$. This modification models a repeated query at the same point, as described earlier. Parametrized by $\tau$ (the number of queries at the same point), oracle $\oracle_\alpha^\tau$ draws random variables $\epsilon_x$ for each $x\in \net_\alpha$, but sub-Gaussian parameter of $\epsilon_x$ is $\sigma/\sqrt{\tau}$. The optimization algorithm pays for $\tau$ oracle calls upon a single call to $\oracle_\alpha^\tau$.

%{\color{red} one picture about near-log-concave function and illustration of the oracle }

%\subsection{$\beta$-log-concavity of noisy oracle}
We argued that $\oracle_\alpha^\tau$ is no more powerful than the original zeroth order oracle given that the algorithm does not revisit the point. In the rest of the section, we will work with $\oracle_\alpha^\tau$ as the oracle model. For any $x$, denote the projection to the $\net_\alpha$ to be $\mathcal{P}_{\net_\alpha} (x)$. Define $F:\K\mapsto \reals$ as
$$F(x) = f(\mathcal{P}_{\net_\alpha} (x))+\epsilon_{\mathcal{P}_{\net_\alpha} (x)}$$
where $\mathcal{P}_{\net_\alpha} (x)$ is the closest to $x$ point of $\net_\alpha$ in the $\ell_\infty$ sense. Clearly,
\begin{align}
	\label{eq:linfty_bound}
	|F-f|_\infty \leq \max_{x\in\net_\alpha} |\epsilon_x| + \alpha L
\end{align}
where $L$ is the ($\ell_\infty$) Lipschitz constant. Since $(\epsilon_x)_{x\in\net_\alpha}$ define a finite collection of sub-Gaussian random variables with sub-Gaussian parameter $\sigma$, we have that with probability at least $1-\delta$
$$\max_{x\in \net_\alpha} |\epsilon_x|\leq \sigma \sqrt{\frac{2n\log(R/\alpha)+2\log(1/\delta)}{\tau}}$$
From now on, we condition on this event, which we call $\mathcal{E}$. To guarantee  \eqref{eq:linfty_approx}, we set
$$\frac{\epsilon}{2n} = \sigma \sqrt{\frac{2n\log(R/\alpha)+2\log(1/\delta)}{\tau}} =  \alpha L$$
where $\tau$ is the parameter from oracle $\oracle_\alpha^{\tau}$. We use the first equality to solve for $\tau$ and the second to solve for $\alpha$:
$$\tau = \frac{\sigma^2 n^2(8n\log(R/\alpha)+8\log(1/\delta))}{\epsilon^2} = \frac{\sigma^2 n^2(8n\log(2LRn/\epsilon)+8\log(1/\delta))}{\epsilon^2} = \mathcal{O}^*(n^3/\epsilon^2)$$
and $\alpha=\epsilon/(2Ln)$.
Note here $L$ affects $\tau$ only logarithmically, and, in particular, we could have defined the Lipschitz constant with respect to $\ell_2$. We also observe that the oracle model depends on $\alpha$ and, hence, on the target accuracy $\epsilon$. However, because the dependence on $\alpha$ is only logarithmic, we can take $\alpha$ to be much smaller than $\epsilon$.

Together with the $\mathcal{O}^*(n^{4.5})$ oracle complexity proved in the previous section for optimizing $F$, the choice of $\tau=\mathcal{O}^*(n^3 \epsilon^{-2})$ evaluations per time step yields a total oracle complexity of
$$\mathcal{O}^*(n^{7.5}\epsilon^{-2})$$
for the problem of stochastic convex optimization with zeroth order information. We observe that a factor of $n^2$ in oracle complexity comes from the union bound over the exponential-sized discretization of the set. This (somewhat artificial) factor can be reduced or removed under additional assumptions on the noise, such as a draw from a Gaussian process with spatial dependence over $\K$. Alternatively, this $n^2$ factor could be removed completely if we could take a union bound over the polynomial number of points visited by the algorithm. Such an argument, however, appears to be tricky.

\section{Optimization of Non-convex Functions with Decreasing Fluctuations}
\label{sec:de-fluc}

Assume the non-convex function $F(x)$ has the property that the ``amount of non-convexity'' is decreasing as $x$ gets close to its global minimum $x^*$. If one has some control on the rate of this decrease, it is possible to break the optimization problem into stages, where at each stage one optimizes to the current level of non-convexity, redefines the optimization region to guarantee a smaller amount of ``non-convexity,'' and proceeds to the next stage. We are not aware of optimization methods for such a problem, and it is rather surprising that one may obtain provable guarantees through simulated annealing.

As one example, consider the problem of stochastic zeroth order optimization where the noise level decreases as one approaches the optimal point. Then, one would expect to obtain a range of oracle complexities between $\log(1/\epsilon)$ and $1/\epsilon^2$ in terms of the rate of the noise decrease.

Let us formalize the above discussion. Suppose there exists a $1$-Lipschitz $\alpha$-strongly convex function $f(x)$ with minimum achieved at $x^*\in\K$:
\begin{align*}
	f(x) - f(x^*) \geq \langle \nabla f(x^*), x - x^* \rangle + \frac{\alpha}{2} \| x - x^*\|^2 \geq \frac{\alpha}{2} \| x - x^*\|^2.
\end{align*}
Define a measure of ``non-convexity'' of $F$ with respect to $f$ in a ball of radius $r$ around $x^*$:
\begin{align*}
	\Delta(r) := \sup_{x \in \B_2^n(x^*,r)} |F(x) - f(x)|.
\end{align*}
We have in mind the situation where $\Delta(r)$ decreases as $r$ decreases to $0$.
At the $t$th stage of the optimization problem, suppose we start with an $\ell_2$ ball  $\B_2^n(x_{t-1}, 2r_t)$ of radius $2r_t$ with the property
$$
\B_2^n(x^*, 3r_{t}) \supset \B_2^n(x_{t-1}, 2r_{t}) \supset \B_2^n(x^*, r_{t}).
$$
Next, we run the simulated annealing procedure for the approximately log-concave function defined over this ball. After $\mathcal{O}^*(n^{4.5})$ queries, we are provided with a point $x_t$ such that in expectation (or high probability)
\begin{align*}
	f(x_t) - f(x^*) \leq  C n \cdot \Delta(3r_t)
\end{align*}
with some universal constant $C>0$. Thanks to strong convexity,
\begin{align*}
	\frac{\alpha}{2} \| x_t - x^*\|^2  \leq  f(x_t) - f(x^*)  \leq  C n \cdot \Delta(3r_t)
\end{align*}
which suggests the recursive definition of $r_{t+1}$:
\begin{align*}	
	\frac{\alpha}{2Cn} r_{t+1}^2 := \frac{\alpha}{2Cn}  \| x_t - x^*\|_{\ell_2}^2 \leq \Delta(3r_t).
\end{align*}
At stage $t+1$ we restrict the region to be $ \B_2^n(x_{t}, 2r_{t+1}) \supset \B_2^n(x^*, r_{t+1})$ and run the optimization algorithm again with the new parameter of approximate convexity. The recursion formula for the radius from $r_t$ to $r_{t+1}$ satisfies
$$
\frac{\alpha}{2Cn} r_{t+1}^2 \leq \Delta(3r_t).
$$
The recursion formula yields a fixed point --- a ``critical radius'' $r^*$ where no further improved can be achieved, with $\frac{\alpha}{2Cn} (r^*)^2 = \Delta(3r^*)$. Let us explore two examples:
\begin{align*}
	&\text{Polynomial}:~~~~ \Delta(r) = c r^{p}, ~0 < p < 2, \\
	&\text{Logarithmic}:~~~~ \Delta(r) = c \log(1+d r),
\end{align*}
where $c,d>0$ are constants. For the polynomial case, the critical radius is $r^* = \left(\frac{2 \cdot 3^pcC n}{\alpha}\right)^{\frac{1}{2-p}}$, and the required number of epochs is at most $\frac{\log \log (r_0/r^*)+ \log(1/\epsilon)}{\log (2/p)}$ if we want to get $r_t = (1+\epsilon) r$.
For the logarithmic case, the critical radius is the unique non-zero solution to
$$
\frac{2cC n}{\alpha} \log(1+3d r) =  r^2.
$$
We conclude that at an $\mathcal{O}^*(1)$ multiplicative overhead on the number of oracle calls, we can optimize to any level of precision above the fixed point $r^*$ of the non-convexity decay function.

\section{Further Applications}
\label{sec:apps}

In this section, we sketch several applications of the zeroth-order optimization method we introduced. Our treatment is cursory, meant only to give a sense of the range of possible domains.

\subsection{Private computation with distributed data}

Suppose $i=1,\ldots,n$ are entities---say, hospitals---that each possess private data in the form of $m$ covariate-response pairs  $\left\{(x_{i,j},y_{i,j})\right\}_{j=1}^{m}$. A natural approach to analyzing the aggregate data is to compute a minimizer $w^*$ of
\begin{align}
	\label{eq:reg_erm}
	f(w) = \frac{1}{mn}\sum_{i, j} \ell(x_{i,j},y_{i,j}; w) + R(w)
\end{align}
for some convex regularization function $R$ and a convex (in $w$) loss function $\ell$. For instance, $\ell(x_{i,j}, y_{i,j}; w)=(y_{i,j}-x_{i,j}\cdot w)^2$ and $R(w)=0$ would correspond to the problem of linear regression.

Given that the hospitals are not willing to release the data to a central authority that would perform the computation, how can the objective \eqref{eq:reg_erm} be minimized? We propose to use the simulated annealing method of this paper. To this end, we need to specify what happens when the value $f(w_t)$ at the current point $w_t$ is requested. Consider the following idea. The current $w_t$ is passed to a randomly chosen hospital $I_t\sim \text{unif}(1,\ldots,n)$. The hospital, in turn, privately chooses an index $J_t\sim \text{unif}(1,\ldots,m)$, computes the loss $\ell(x_{I_t,J_t}, y_{I_t,J_t}; w_t)$, adds zero-mean noise $\eta_t\sim N(0,1)$, and passes the resulting value
$$v_t = \ell(x_{I_t,J_t}, y_{I_t,J_t}; w_t) + \eta_t$$
back to the central authority. Since the computation is done privately by the hospital, the only value released to the outside world is the noisy residual. It is easy to check that $v_t$ is an unbiased estimate of $f(w_t)$:
$$\E[v_t] = f(w_t)$$
with respect to the random variables $(I_t,J_t)$ and $\eta_t$. Moreover, the noise level with respect to each source of randomness is of constant order. By repeatedly querying for the noisy value at $w_t$, the algorithm can reduce the noise variance, as in Section~\ref{sec:stoch_zeroth}, yet---importantly---the returned value is for a potentially different random choice of the hospital and the data point. This latter fact means that repeated querying does not allow the central authority to learn a specific data point. Interestingly, the additional layer of privacy given by the zero-mean noise $\eta_t$ presents no added difficulty to the minimization procedure, except for slightly changing a constant in the number of required queries.

\subsection{Two-stage stochastic programming}

\cite{dyer2013simple} discuss the following mathematical programming formulation:

\begin{align}
	\label{eq:two_stage}
	\max~~~ &px + \E\left[\max \left\{qy | Wy\leq Tx - \xi,~ y\in \reals^{n_1}\right\}\right] \\
	\text{subject to} ~~~&Ax\leq b, \notag
\end{align}
where $q\in\reals^{n_1}$, $W\in\reals^{d\times n_1}$, and $T\in\reals^{d\times n}$. The expectation is taken over the random variable $\xi$. This problem is concave in $x$, and can be solved in two stages. If, given $x$, an approximate value for the inner expected maximum can be computed, the problem falls squarely into the setting of zeroth order optimization with approximate function evaluations. While the method of \cite{dyer2013simple} is simpler, its dependence on the target accuracy $\epsilon$ is worse. Additionally, the method of this paper can deal directly with constraint sets with non-smooth boundaries; the method can also handle more general functions in \eqref{eq:two_stage} that are not smooth.

\subsection{Online learning via approximate dynamic programming}

Online learning is a generic name for a set of problems where the forecaster makes repeated predictions (or decisions). For concreteness, suppose that on each round $t=1,\ldots,T$, the forecaster observes some side information $s_t\in S$, makes a prediction $\widehat{y}_t\in\K$, and observes an outcome $y_t\in\Y$. The goal of the forecaster is to ensure small regret, defined as
$$\sum_{t=1}^T \ell(\widehat{y}_t,y_t) - \inf_{f\in\F}\sum_{t=1}^T \ell(f(s_t),y_t)$$
where $\F$ is a class of strategies, mapping $S$ to $\K$, and $\ell:\K\times\Y\to\reals$ is a cost function, which we assume to be convex in the first argument. The vast majority of online learning methods can be written as solutions to the following optimization problem (see \citealt{rakhlin2012relax}):
\begin{align*}
	\widehat{y}_t = \argmin{\widehat{y}\in\K} \max_{y_t\in\Y} \left\{ \ell(\widehat{y},y_t) + \Phi_t(s_1,y_1,\ldots,s_t,y_t)\right\}
\end{align*}
where $\Phi_t$ is a relaxation on the minimax optimal value. One of the tightest relaxations is the so-called sequential Rademacher complexity, which itself involves an expectation over a sequence of Rademacher random variables and a supremum over the class $\F$. While the gradient of $\Phi_t$ might not be available, it is often possible to approximately evaluate this function and solve the saddle point problem approximately.

\begin{appendix}

\section{Proofs of Section 3}

\begin{proof}[Proof of Lemma \ref{lem:linfty_beta_concave}]
The proof is straightforward:
	\begin{align*}
		g(\alpha x + (1-\alpha)y) &\geq e^{-\beta/2} h(\alpha x + (1-\alpha)y) \geq e^{-\beta/2} h(x)^\alpha h(y)^{1-\alpha} \\
					              &\geq e^{-\beta/2} (e^{-\beta/2}g(x))^\alpha (e^{-\beta/2}g(y))^{1-\alpha} \geq e^{-\beta} g(x)^\alpha g(y)^{1-\alpha}.
	\end{align*}
\end{proof}

\section{Proofs of Section 4.1}

\begin{proof}[Proof of Lemma \ref{Lemma:InitStepa}]

Consider a unidimensional $\beta$-log-concave function $g : \reals \rightarrow \reals$. In view of Lemma~\ref{lem:1dsandwich}, $g$ can be ``sandwiched'' by a log-concave function $h$ such that $e^{-\beta} h(x) \leq g(x) \leq h(x)$.

Given $\ell$, we want to find $p \in \ell$ such that $g(p) \geq e^{-3\beta} \max_{z\in \ell}g(z)$. We use the following 3-point method, inspired by \cite{AgaFosHsuKakRak13siam}, to provide such a point. Let us work with the convex function
\begin{align*}
\tilde{h} &= - \log h
\end{align*}
and a nearly-convex function $$\tilde{g} = - \log g.$$
The sandwiching guarantee can be written as
\begin{align*}
\tilde{h} (x) \leq \tilde{g}(x) \leq \tilde{h}(x) + \beta.
\end{align*}
We now claim that each iteration of the ``while'' loop of Algorithm~\ref{alg:initial} maintains the following property: either the length of the interval is reduced by at least $3/4$ while still containing the optimal point, or we have the output point $p$ that satisfies
\begin{align*}
	\tilde{g}(p) &\leq \min \tilde{g} + 3 \beta
\end{align*}
In the latter case, $g(p) \geq e^{-3\beta} \max_{z \in \ell} g(z)$ as desired.

There are essentially two cases. First, if $\tilde{g}(x_l) - \tilde{g}(x_r) > \beta$ (or similarly we can argue for $|\tilde{g}(x_l) - \tilde{g}(x_c)| > \beta$ and $|\tilde{g}(x_r) - \tilde{g}(x_c)| > \beta$), we have
$$
\tilde{h} (x_l) + \beta \geq \tilde{g} (x_l) > \tilde{g}(x_r) + \beta \geq \tilde{h} (x_r) +\beta
$$
and thus $\tilde{h}(x_l) > \tilde{h}(x_r)$. Because of convexity of $\tilde{h}$ we can safely remove $[\uu{x},x_l]$ with the remaining interval still containing the point we are looking for. Second case is when
\begin{align*}
|\tilde{g}(x_l) - \tilde{g}(x_r)| \leq \beta, ~~~~|\tilde{g}(x_l) - \tilde{g}(x_c)| \leq \beta, ~~~~|\tilde{g}(x_r) - \tilde{g}(x_c)| \leq \beta
\end{align*}
Here, we can show the function $g(x)$ is flat enough for $[\uu{x}, \bb{x}]$ and thus the best of $x_l, x_c, x_r$ are good enough. It is not hard to see that
\begin{align*}
|\tilde{h}(x_l) - \tilde{h}(x_r)| \leq 2\beta, ~~~~|\tilde{h}(x_l) - \tilde{h}(x_c)| \leq 2\beta, ~~~~|\tilde{h}(x_r) - \tilde{h}(x_c)| \leq 2\beta.
\end{align*}
Consider the point $x_l$. By convexity of $\tilde{h}$, there must be a supporting line $k_l(x)$ that is below the convex function $\tilde{h}$ and such that $k_l(x_l)=\tilde(x_l)$. Thus
\begin{align*}
\min_{[\uu{x},x_{l}]} \tilde{h}(x) \geq  \min_{[\uu{x},x_{l}]} k_l(x) \geq k_l(x_l) - 2\beta = \tilde{h}(x_l) - 2\beta
\end{align*}
using the fact that $|\uu{x}-x_{l}|=|x_l-x_c|$. Similarly we can prove
\begin{align*}
\min_{[x_{l},x_{r}]} \tilde{h}(x) \geq \tilde{h}(x_c) - 2\beta, ~~~~\min_{[x_{r},\bb{x}]} \tilde{h}(x) \geq \tilde{h}(x_r) - 2\beta .
\end{align*}
Thus
\begin{align*}
\min_{[\uu{x},\bb{x}]} \tilde{h}(x) \geq \min(\tilde{h}(x_l),\tilde{h}(x_c),\tilde{h}(x_r)) - 2\beta.
\end{align*}
By sandwiching
\begin{align*}
\min_{[\uu{x},\bb{x}]} \tilde{g}(x) & \geq \min(\tilde{g}(x_l),\tilde{g}(x_c),\tilde{g}(x_r)) - 3\beta
\end{align*}
and, hence,
\begin{align*}
\tilde{g}(p) &\leq \min_{[\uu{x},\bb{x}]} \tilde{g}(x) + 3\beta.
\end{align*}
It remains to show that the algorithm will terminate in an $\mathcal{O}^*(1)$ number of steps. Let $\tilde{L}$ be the Lipschitz constant of $\tilde{h}$. By the time the interval is shrunk to  $|\uu{x}-\bb{x}|\leq \beta/\tilde{L}$, the algorithm must have entered the second case above and terminated.
\end{proof}

\begin{proof}[Proof of Lemma \ref{Lemma:InitUni}]
Consider a unidimensional $\beta$-log-concave function $g : \reals \rightarrow \reals$. In view of Lemma~\ref{lem:1dsandwich}, $g$ can be ``sandwiched'' by a log-concave function $h$ such that $e^{-\beta} h(x) \leq g(x) \leq h(x)$.

We consider the interval $[x_l,x_r] = [p,\bb{x}]$ (the other case follows similarly). If $g(\bb{x}) \geq \frac{1}{2}e^{-\beta}\tilde{\epsilon} g(p)$, set $e_1 = \bb{x}$. Otherwise we have $g(\bb{x}) < \frac{1}{2}e^{-\beta} \tilde{\epsilon} g(p)$ and we proceed. The procedure always query the  midpoint $x_m$ of current interval $[x_l,x_r]$. If $g(x_m) > \tilde{\epsilon} g(p)$ set $x_l=x_m$, or if $g(x_m) <  \frac{1}{2}e^{-\beta} \tilde{\epsilon} g(p)$ set $x_r = x_m$, and continue the search. Either operation halves the interval. If the midpoint $x_m$ is such that $\frac{1}{2}e^{-\beta} \tilde{\epsilon} g(p) \leq g(x_m) \leq \tilde{\epsilon} g(p)$, stop the process and return $e_1 = x_m$.
At every iteration, the interval $[x_l,x_r]$ is such that $g(x_l)>\tilde \epsilon g(p)$ and $g(x_r) < \frac{1}{2}e^{-\beta} \tilde{\epsilon} g(p)$. We now claim that the algorithm must terminate in an $\mathcal{O}^*(1)$ number of steps.
Let $\tilde{h} = -\log h$, and let $\tilde{L}$ be the Lipschitz constant of $\tilde{h}$. As soon as the length of the current interval  $|x_l-x_r|<1/(2\tilde{L})$, we have $|\tilde{h}(x_l)-\tilde{h}(x_r)|<1/2$. Thus $h(x_l)/h(x_r) < e^{1/2}$ and $g(x_l)/g(x_r)< e^{1/2+\beta}$, implying that both $g(x_l)>\tilde \epsilon g(p)$ and $g(x_r) < \frac{1}{2}e^{-\beta} \tilde{\epsilon} g(p)$ cannot be true at the same time as $2e^{\beta}> e^{1/2+\beta}$. Hence, the algorithm terminates in a number of steps that is logarithmic in $\tilde{L}$.

\end{proof}

\begin{proof}[Proof of Lemma~\ref{lem:1dsampler}]
Let $h$ be the log-concave function associated with the $\beta$-log-concave function $g$ in the sense of Lemma~\ref{lem:1dsandwich}, so that $e^{-\beta}h(x)\leq g(x)\leq h(x)$ for all $x\in\ell$, and $L_f(t)$ denote the (upper) level set of a function $f$ at level $t$.

We note that since $L_g(t) \subset L_h(t)$ and $e^{-\beta}h(x)\leq g(x)\leq h(x)$, \eqref{eq:init_ei} implies that
	 	\begin{equation}\label{AuxEq}L_{h}(e^\beta\tilde{\epsilon} g(p)) \subseteq [e_{-1},e_{1}] \subseteq L_{h}(\mbox{$\frac{1}{2}$}e^{-\beta} \tilde{\epsilon} g(p)).\end{equation}
Moreover, either $e_{-1}=\uu{x}$ or $g(e_{-1})\leq \tilde\epsilon g(p)$ which implies $h(e_{-1})\leq e^\beta\tilde\epsilon g(p)<\frac{1}{2}e^{-\beta}g(p)$ if $\tilde\epsilon<\frac{1}{2}e^{-2\beta}$.

The stationary distribution for this sampling scheme is a truncated distribution according to the $\beta$-log-concave distribution $g$ restricted to $[e_{-1},e_{1}]$.  (Indeed, this correspond to the classic Accept-Reject method to simulate $g$ based on the uniform distribution with constant $M:=g(p)e^{3\beta}$, see \cite{robert2004monte} page 49.)
Therefore
$$
\begin{array}{rl}
d_{\sf tv}(\pi_{g,\ell},\hat \pi_{g,\ell}) & = d_{\sf tv}(\pi_{g,\ell}1\{\ell\setminus [e_{-1},e_1]\},\hat \pi_{g,\ell}1\{\ell\setminus[e_{-1},e_1]\}) + d_{\sf tv}(\pi_{g,\ell}1\{[e_{-1},e_1]\},\hat \pi_{g,\ell}1\{[e_{-1},e_1]\}) \\
&\leq  \mathbb{P}_{z\sim g} \left(z \notin [e_{-1},e_1] \right) +  \frac{\mathbb{P}_{z\sim g} \left(z \notin [e_{-1},e_1] \right)}{1-\mathbb{P}_{z\sim g} \left(z \notin [e_{-1},e_1] \right)}.
\end{array}
$$

Next we verify that the truncation error (in the total variation norm) of restricting $g$ to $[e_{-1},e_{1}]$ instead of $\ell$ is of the desired order. By Lemma~\ref{lem:tails} which quantifies the tail decay of unidimensional log-concave measures, we have
\begin{align*}
\mathbb{P}_{z\sim g} \left(z \notin [e_{-1},e_1] \right) & \leq e^\beta \cdot \mathbb{P}_{z\sim h} \left(z \notin [e_{-1},e_1]\right)  \leq e^\beta \cdot \mathbb{P}_{z\sim h} \left(z \notin L_{h}(e^\beta\tilde{\epsilon} g(p)) \right) \\
& \leq e^\beta \cdot \mathbb{P}_{z\sim h} \left(z \notin L_{h}(e^\beta\tilde{\epsilon} M_h) \right)  \leq e^\beta \cdot \mathbb{P}_{z\sim h}\left(h(z) \leq e^\beta  \tilde{\epsilon} M_h \right)  \leq e^{2\beta} \cdot \tilde{\epsilon}
\end{align*}
where we used (\ref{AuxEq}).

Thus, provided that $e^{2\beta}\tilde{\epsilon} \leq 1/2$, the total variation distance between the truncated measure $\hat\pi_{g,\ell}$ supported on $[e_{-1}, e_1]$ satisfies
$$
d_{\sf tv}(\pi_{g,\ell},\hat \pi_{g,\ell}) \leq  3e^{2\beta} \tilde{\epsilon}.
$$

In order to bound the number of evaluations we first bound the probability of the event  the event $\{g(x) > \frac{1}{2}e^{-2\beta} g(p)\}$.   Indeed, by (\ref{AuxEq}), if $X \sim U([e_{-1},e_{1}])$ and $Z \sim U\big(L_h(\mbox{$\frac{1}{2}$}\tilde\epsilon e^{-\beta} g(p))\big)$, we have
\begin{align*}
\mathbb{P}_{X} \left(g(X) \geq \frac{1}{2}e^{-2\beta}g(p)\right) & \geq \mathbb{P}_{X} \left(h(X) \geq \frac{1}{2}e^{-\beta} g(p) \right) \\
& \geq  \mathbb{P}_{Z} \left(h(Z) \geq  \frac{1}{2}e^{-\beta}h(p)\right) \\
& = \frac{{\sf vol}(L_h(\frac{1}{2}e^{-\beta} h(p)))}{{\sf vol}(L_h( \mbox{$\frac{1}{2}$}\tilde{\epsilon} e^{-\beta} h(p)))} \end{align*}
By Lemma~\ref{lem:volume} with $s = \mbox{$\frac{1}{2}$}\tilde{\epsilon} e^{-\beta} h(p)$ and $t=\mbox{$\frac{1}{2}$}e^{-\beta} h(p)$, it follows that
\begin{align*}
\frac{{\sf vol}(L_h(\mbox{$\frac{1}{2}$}e^{-\beta} h(p)))}{{\sf vol}(L_h( \mbox{$\frac{1}{2}$}\tilde{\epsilon} e^{-\beta} h(p)))} & \geq \frac{\log \frac{\max h}{\mbox{$\frac{1}{2}$}e^{-\beta}h(p)}}{\log \frac{\max h}{\mbox{$\frac{1}{2}$}\tilde{\epsilon} e^{-\beta} h(p)}}  =  \frac{\log \frac{\max h}{h(p)}+ \log 2 + \beta}{\log \frac{\max h}{ h(p)}+ \log 2 + \log 1/\tilde{\epsilon} + \beta}  \geq \frac{\log 2 + \beta}{\log 2 + \beta + \log 1/\tilde{\epsilon} }\geq \frac{\log 2}{\log (2/\tilde{\epsilon})}.\end{align*}
Then, since $r\sim U([0,1])$ we have
\begin{align*}
& \mathbb{P}\left(r \leq \frac{e^{-3\beta} g(x)}{g(p)} \mid g(x) \geq \frac{1}{2}e^{-2\beta} g(p) \right) \geq \frac{1}{2}e^{-5\beta}
\end{align*}
and thus
\begin{align*}
\mathbb{P}\left(r \leq \frac{e^{-3\beta} g(x)}{g(p)} \right) \geq \frac{e^{-5\beta}\log 2}{2\log (2/\tilde{\epsilon})}.
\end{align*}

Since we have a lower bound on the acceptance probability on each sampling step, the number of iterations we need to sample is of the order $\frac{e^{-5\beta}\log 2}{2\log (2/\tilde{\epsilon})}$. This quantity is $\mathcal{O}(\log(1/\tilde\epsilon))$ if $\beta$ is $\mathcal{O}(1)$.
\end{proof}

\section{Proofs of Section 4.2}

\begin{proof}[Proof of Theorem \ref{thm:compare_cond}]
Define the shorthand $\rho=e^{\beta/2}$. By sandwiching,
$$
\rho^{-1} h(x) \leq \HH (x) \leq  \rho h(x)
$$
Then
\begin{align*}
\rho^{-2} \pi_h(x) \leq \pi_\HH(x) \leq \rho^2 \pi_h(x)  \\
\rho^{-2} P_u^h(A) \leq P_u^\HH(A) \leq \rho^2 P_u^h(A)
\end{align*}
for any $x, u \in \K$ and $A \subset \K$.
Thus we have
$$
\phi^{\HH}(S) \geq \rho^{-6} \phi^{\HH}(S).
$$
The $s$-conductance bound can be derived as follows.
\begin{align*}
\phi_s^\HH & = \inf_{A \subset \K, s \leq \pi_\HH(A) \leq 1/2} \frac{\int_{x \in A} P_x^\HH(K \backslash A) d \pi_\HH}{\pi_\HH(A) - s} \\
  &\geq \rho^{-6}  \inf_{A \subset \K, s \leq \pi_\HH(A) \leq 1/2} \frac{\int_{x \in A} P_x^h(K \backslash A) d \pi_h}{\pi_h(A) - s/\rho^2} \\
 &\geq \rho^{-6}  \inf_{A \subset \K, s/\rho^2 \leq \pi_h(A) \leq 1/2} \frac{\int_{x \in A} P_x^h(K \backslash A) d \pi_h}{\pi_h(A) - s/\rho^2} = \rho^{-6} \phi_{s/\rho^2}^h
\end{align*}

\end{proof}

\begin{proof}[Proof of Theorem \ref{thm:mixing_LC}]
	The $H_s$ defined in Theorem~\ref{thm:contraction_LV} can be upper bounded by
\begin{align*}
H_s & = \sup_{A:\pi_\HH(A) \leq s} \int_{A} \left| \frac{d \sigma^{(0)}}{d \pi_\HH } - 1 \right| d \pi_\HH \\
    & \leq \sup_{A:\pi_\HH(A) \leq s} \left\{ \int_{A} \left( \frac{d \sigma^{(0)}}{d \pi_\HH } - 1 \right)^2 d \pi_\HH  \cdot \int_{A} d \pi_\HH \right\}^{1/2} \\
	& \leq s^{1/2} \sup_{A:\pi_\HH(A) \leq s} \left\{ \int_{A} \left( \frac{d \sigma^{(0)}}{d \pi_\HH } - 1 \right)^2 d \pi_\HH \right\}^{1/2} \leq s^{1/2} \| \sigma^{(0)}/\pi_\HH \|^{1/2} = s^{1/2} M^{1/2}.
\end{align*}
Let us now use upper bound of Theorem~\ref{thm:contraction_LV} with $s = \left(\frac{\gamma}{2M^{1/2}}\right)^2$ and $D = \frac{2R}{r}$, as well as Theorem~\ref{thm:compare_cond}. We obtain
\begin{align*}
d_{tv}(\pi_\HH,\sigma^{(m)}) & \leq \frac{\gamma}{2} + \frac{2M}{\gamma} \left(1 - \frac{(\phi_{s}^\HH)^2}{2} \right)^m \\
& \leq \frac{\gamma}{2}  + \frac{2M}{\gamma} \left(1 -  \frac{(\rho^{-6}\phi_{s/\rho^2}^h)^2}{2} \right)^m \\
& \leq \frac{\gamma}{2} + \frac{2M}{\gamma} \exp\left( - \frac{m (\rho^{-6}\phi_{s/\rho^2}^h)^2  }{2} \right)
\end{align*}
where $\rho=e^{\beta/2}$. In view of Theorem~\ref{thm:cond_lower_bound},
$$\phi_{s/\rho^2}^h \geq \frac{cr}{nR\log^2\left(\frac{\rho^2 nR}{r\left(\gamma/2M^{1/2}\right)^2}\right)}$$
we arrive at
\begin{align*}
d_{tv}(\pi_\HH,\sigma^{(m)}) & \leq \frac{\gamma}{2} + \frac{2M}{\gamma} \exp\left( - \frac{m}{2} \left(\frac{\rho^{-6}c r}{n R \log^2 \frac{\rho^2 n R M}{r\gamma^2}} \right)^2 \right)
\end{align*}
Hence, if
$$
m \geq C n^2 \frac{\rho^{12}R^2}{r^2} \log^4 \frac{\rho^2 M nR}{r \gamma^2} \log \frac{M}{\gamma}
$$
then
$$d_{\sf tv}(\pi_\HH, \sigma^{(m)}) \leq \gamma.$$
\end{proof}

\begin{proof}[Proof of Theorem \ref{Thm:Allerrors}]
Step 1. (Main Step) By the triangle inequality we have
\begin{equation}\label{TriIneq}
d_{\sf tv}(\hat \sigma^{(m)}, \pi_\HH) \leq d_{\sf tv}(\hat\sigma^{(m)}, \sigma^{(m)}) + d_{\sf tv}(\sigma^{(m)}, \pi_\HH)
\end{equation}
The last term in (\ref{TriIneq}) converges to zero at a geometric rate in $m$ by Theorem~\ref{thm:mixing_LC}. Specifically we have
$$d_{\sf tv}(\sigma^{(m)}, \pi_\HH) \leq H_s + \frac{H_s}{s} \left(1 - \frac{(\phi_s^g)^2}{2}\right)^m.$$

To bound $d_{\sf tv}(\hat\sigma^{(m)}, \sigma^{(m)})$, the total variation distance after $m$ steps between the two random walks from their corresponding starting distributions $\hat{\sigma}^{(0)}$ and $\sigma^{(0)}$, write for any measurable set $A$
$$
\hat{\sigma}^{(m)}(A) = \int (P_x^{\hat{\HH}})^{(m)}(A) \hat{\sigma}^{(0)} d x \ \ \mbox{and} \ \ \sigma^{(m)}(A) = \int (P_x^{\HH})^{(m)}(A) \sigma^{(0)} d x
$$
so that
\begin{align*}
	d_{\sf tv}\left( \sigma^{(m)}, \hat{\sigma}^{(m)}\right) \leq \sup_{u\in \K} ~ d_{\sf tv}\left((P_u^{\hat{\HH}})^{(m)}, (P_u^{\HH})^{(m)} \right) + 2d_{\sf tv}\left(\hat{\sigma}^{(0)},\sigma^{(0)}  \right).
\end{align*}

The result follows from Step 2 that shows $\sup_{u\in \K} ~ d_{\sf tv}\left((P_u^{\hat{\HH}})^{(m)}, (P_u^{\HH})^{(m)} \right) \leq m \sup_{\ell \subset \K} d_{tv}(\hat \pi_{g,\ell},\pi_{g,\ell} ).$

Step 2. (Error Propagation Bound in $m$ Steps) The unidimensional sampling scheme produces a sample from a truncated distribution (see Lemma~\ref{lem:1dsampler}). That is, at each step of the Hit-and-Run algorithm, we are sampling from a truncated measure according to a truncated function $\hat{\HH}$ along each line $\ell$ of $\HH$ (approximately-log-concave function in $\mathbb{R}^n$). Let us denote the transition probability kernel starting from $u$ for this truncated function to be $P_u^{\hat{\HH}}$
and the kernel for the original function is $P_u^{\HH}$. Let us bound the total variation distance between these two kernels through the spherical (elliptical) coordinate system, and with $p(\cdot)$ being the density corresponding to the measure $P(\cdot)$).

Suppose that $\sup_{\ell \subset \K} d_{tv}(\hat \pi_{g,\ell},\pi_{g,\ell} ) \leq \tilde\epsilon$.
Since $p_u^{\hat{\HH}}(\theta) = p_u^{\HH}(\theta)=p(\theta)$, it holds that
\begin{align*}
	 2 d_{\sf tv}(P_u^{\hat{\HH}}, P_u^{\HH}) & = \int |p_u^{\hat{\HH}}(r|\theta)p_u^{\hat{\HH}}(\theta) - p_u^{\HH}(r|\theta) p_u^{\HH}(\theta)|  dr d\theta \\
	& \leq \int \left\{ \int |p_u^{\hat{\HH}}(r|\theta) - p_u^{\HH}(r|\theta)|  dr \right\} p(\theta) d\theta   \\
	& \leq 2\tilde{\epsilon} \int p(\theta) d\theta = 2\tilde{\epsilon}
\end{align*}
where on each line (over all $\theta$ according to the measure given by the linear transformation composed with uniform direction) the truncated distribution is an $\tilde{\epsilon}$ approximation to $\pi_\HH$. We now claim that the $m$-fold iterate of the Hit-and-Run kernel satisfies $d_{\sf tv}\left((P_u^{\hat{\HH}})^{(m)}, (P_u^{\HH})^{(m)}\right) \leq m \tilde{\epsilon}$. Let us prove this by induction. Suppose it holds for $m-1$ steps. Then
\begin{align*}
	2 d_{\sf tv}\left((P_u^{\hat{\HH}})^{(m)}, (P_u^{\HH})^{(m)} \right) &= \int | \int (p_u^{\hat{\HH}})^{(m-1)}(y) p_y^{\hat{\HH}}(x) - (p_u^\HH)^{(m-1)}(y) p_y^\HH(x)  dy | dx  \\
	&\leq  \int | \int (p_u^{\hat{\HH}})^{(m-1)}(y) p_y^{\hat{\HH}}(x) - (p_u^\HH)^{(m-1)}(y) p_y^{\hat{\HH}}(x)  dy | dx \\
	& \quad + \int | \int (p_u^\HH)^{(m-1)}(y) p_y^{\hat{\HH}}(x) - (p_u^\HH)^{(m-1)}(y) p_y^\HH(x)  dy | dx \\
	& \leq \int |(p_u^{\hat{\HH}})^{(m-1)}(y)  - (p_u^\HH)^{(m-1)}(y) | \left( \int p_y^{\hat{\HH}}(x)  dx \right)  dy \\
	& \quad + \int (p_u^\HH)^{(m-1)}(y) \left( \int| p_y^{\hat{\HH}}(x) -  p_y^\HH(x)| dx\right) dy \\
	& \leq 2 d_{\sf tv}\left((P_u^{\hat{\HH}})^{(m-1)}, (P_u^{\HH})^{(m-1)} \right) + 2 \max_y ~ d_{\sf tv}(P_y^{\hat{\HH}}, P_y^{\HH}) \\
    & \leq 2(m-1)\tilde{\epsilon} + 2\tilde{\epsilon} =  2m\tilde{\epsilon}
\end{align*}
\end{proof}

\begin{proof}[Proof of Theorem \ref{thm:cond_lower_bound}]
The proof follows closely the arguments in the proof of Theorem 3.7 in \cite{lovasz2006fast} for bounded sets with modifications to avoid the truncation device discussed in Section 3.3 of \cite{lovasz2006fast}. Define the step-size $F(x)$ by $P(\|x-y\|\leq F(x))=1/8$ where $y$ is a random step from $x$. Next define $\lambda(x,t)= {\rm vol}((x+tB)\cap L(\frac{3}{4}f(x)))/{\rm vol}(tB)$ and $s(x)=\sup\{t>0:\lambda(x,t)\geq 63/64\}$. Finally, $\alpha(x) = \inf\{ t \geq 3 :P(f(y)\geq t f(x)) \leq 1/16\}$ where $y$ is a hit-and-run step from $x$.

Let $\K = S_1 \cup S_2$ be a partition into measurable sets, where $S_1  = S$ and $p = \pi_h(S_1) \leq \pi_h(S_2)$. For for $D = R \log (C' n^2/p )$ we will prove that
\begin{align}
\label{cond.eqtn}
\int_{S_1} P_x(S_2) dx \geq \frac{1}{C nD \log \frac{nD}{p}} \pi_h(S_1).
\end{align}
Consider the points that are deep inside these sets with respect to 1-step distribution
$$
S_1' = \{ x \in S_1 : P_x(S_2) < 1/1000 \}
\ \
\mbox{and}
\ \
S_2' = \{ x \in S_2: P_x(S_1) < 1/1000 \},
$$
and the complement $S_3' = \K \backslash S_1' \cup S_2'$.

Suppose $\pi_h(S_1') < \pi_h(S_1)/2$. Then
$$
\int_{S_1} P_x(S_2) dx \geq \frac{1}{1000} \pi_h(S_1 \backslash S_1') \geq \frac{1}{2000} \pi_h(S_1)
$$
which proves \eqref{cond.eqtn}. Thus we can assume
\begin{align}
\pi_h(S_1') \geq \pi_h(S_1)/2 \ \ \mbox{and} \ \
\pi_h(S_2') \geq \pi_h(S_2)/2.
\end{align}

Define the exceptional subset $W$ as set of points $u$ for which $\alpha(u)$ is very large
$$
W = W_1 \cup W_2, \ \ \mbox{where} \ W_1 := \{ u \in S:\alpha(u) \geq 2^{27} nD/p\} \ \ \mbox{and} \ \ W_2 := \{u \in \K : \|x-z_h\| \geq D \}.
$$
By Lemma 6.10 in \cite{lovasz2007geometry}, $\pi_h(W_1) \leq p/\{2^{23} nD\}$ and, by Lemma 5.17 in \cite{lovasz2007geometry}, $\pi_h(W_2) \leq \frac{p}{Cn^2}$. Now for any $u \in S_1' \backslash W$ and $v \in S_2' \backslash W$
$$
d_{tv}(P_u,P_v) \geq P_u(S_1) - P_v(S_1) = 1 - P_u(S_2) -P_v(S_1) > 1- \frac{1}{500}
$$
from the definition of $S_1'$ and $S_2'$. Thus by Lemma 6.8 of \cite{lovasz2006hit}, we have
$$
d_h(u,v) \geq \frac{1}{128 \log (3 + \alpha(u))} \geq \frac{1}{2^{12} \log \frac{nD}{p}} \ \ \mbox{or} \ \
|u - v| \geq \frac{1}{4\sqrt{n}} \max \{ F(u), F(v) \}.
$$
By Lemma 3.2 of \cite{lovasz2006hit}, the latter implies that
$$
|u - v| \geq \frac{1}{2^8 \sqrt{n}} \max \{ s(u), s(v) \}
$$
In either case, by Lemma 3.5 in \cite{lovasz2006fast}, for any point $x \in [u, v]$, we have
\begin{align*}
s(x) & \leq 2^{14} \log \frac{nD}{p} |u-v|\sqrt{n} \\
& \leq 2^{14} \log \frac{nD}{p} d_{\K\setminus W_2}(u,v) D \sqrt{n}
\end{align*}
where the second inequality follows from $u,v \in \K\setminus W_2$.

Recall the original partition $S_1', S_2'$ and $S_3'=\K\setminus \{S_1'\cup S_2'\}$ of $\K$. We will apply Theorem 2.1 of \cite{lovasz2006hit} with a different partition. Consider the partition of $ \K \setminus W_2$ defined as $\bar S_1 = S_1' \backslash W$, $\bar S_2 = S_2' \backslash W$ and $\bar S_3=\K\setminus \{W_2 \cup S_1'\cup S_2'\}$. These definitions imply that $\bar S_3 \subset S_3' \cup W$ so that \begin{equation}\label{S3rel}\pi_h(\bar S_3) \leq \pi_h(S_3')+\pi_h(W).\end{equation} Define for $x \in \K \setminus W_2$
$$
\bar h(x)  = \frac{s(x)}{2^{16} D \sqrt{n} \log \frac{nD}{p}}
$$
It follows that for any $u \in S_1'\backslash W$ and $v \in S_2'\backslash W$ and $x \in [u,v]$,  we have
$\bar h(x) \leq d_{\K\setminus W_2}(u,v)/3$, and since $\K\setminus W_2$ is a convex body, we have by Theorem 2.1 of \cite{lovasz2006hit} that
\begin{equation}\label{BigStep} \int_{\bar S_3} h(x) dx \geq \frac{\int_{\K\setminus W_2} \bar h(x) h(x) dx }{\int_{\K\setminus W_2} h(x) dx} \min\left\{\int_{\bar S_1} h(x) dx,\int_{\bar S_2} h(x) dx \right\} \end{equation}

Although $\mathbb{E}_h(s(x))$ is large, we need a lower bound on $\int_{\K\setminus W_2} s(x) h(x) dx/ \int_\K h(x)dx$. Since $s(x)$ can be large if $\K$ is unbounded we modify the standard bound next. Because the level set of measure $1/8$
contains a ball of radius $r$, we have
$$\begin{array}{rl}
\int_{\K\setminus W_2} s(x) h(x) dx/\int_\K h(x)dx &=  \int_{\K\setminus W_2} \int_0^{s(x)} dt h(x) dx /\int_\K h(x)dx\\
& = \int_0^\infty \int_{\{ x \in \K\setminus W_2: \lambda(x,t) \geq 63/64 \}} h(x) dx dt /\int_\K h(x)dx\\
& \geq  \frac{1}{16}\int_0^\infty \int_{\{x \in \K\setminus W_2: \lambda(x,t)\geq 3/4\}}h(x)dxdt/\int_\K h(x)dx\\
& \geq \frac{1}{16}\int_0^\infty \left(\int_{\{x \in \K: \lambda(x,t)\geq 3/4\}}h(x)dx - \int_{W_2}h(x)dx\right)_+dt/\int_\K h(x)dx \\
&  \geq \frac{1}{16}\int_0^\infty \left(\frac{1}{2}-\frac{12t\sqrt{n}}{r} - \frac{p}{Cn^2}\right)_+dt \\
& \geq \frac{r}{2^{12}\sqrt{n}}
 \end{array}$$
where the first and third inequality follows from page 998 in \cite{lovasz2006hit}, the second by definition of $W_2$ where we take $C$ and $n$ large enough. Therefore, dividing both sides of (\ref{BigStep}) by $\int_\K h(x) dx$, (\ref{S3rel}), $\pi_h(S_i') \geq \pi_h(S_i)/2$, $i=1,2$,  and $p=\pi_h(S_1)\leq \pi_h(S_2)$,  we have
\begin{align*}
\pi_h(S_3') + \pi_h(W) \geq \pi_h(\bar S_3) &\geq \frac{\int_{\K\setminus W_2} \bar h(x) h(x) dx }{\int_{\K\setminus W_2} h(x) dx} \min\{\pi_h(\bar S_1), \pi_h(\bar S_2)\}\\
 & = \frac{\int_{\K\setminus W_2} s(x) h(x) dx/\int_\K h(x)dx}{2^{16} D \sqrt{n} \log \frac{nD}{p}} \min\{\pi_h(S_1' \backslash W), \pi_h(S_2' \backslash W)\}\\
& \geq \frac{1}{2^{28} n(D/r) \log \frac{nD}{p}}\left\{\min\{\pi_h(S_1'), \pi_h(S_2')\}  - \pi_h(W) \right\}\\
& \geq \frac{1}{2^{28} n(D/r) \log \frac{nD}{p}}\left\{\frac{1}{2}\min\{\pi_h(S_1), \pi_h(S_2)\}  - \pi_h(W) \right\}\\
& \geq \frac{1}{2^{28} n(D/r) \log \frac{nD}{p}}\left\{\frac{\pi_h(S_1)}{2}  - p/4 \right\} \geq \frac{\pi_h(S_1)}{2^{32} n(D/r) \log \frac{nD}{p}}\\
\end{align*}
where we used that $\pi_h(W) \leq p/4$. Therefore,
$$ \int_{S_1} P_x(S_2)dx \geq \frac{\pi_h(S_3')}{2000}\geq \frac{\pi_h(S_1)}{Cn(D/r)\log(nD/p)} $$
\end{proof}

\section{Proofs of Section 5}

\begin{proof}[Proof of Lemma \ref{lma: warm-start}] Define
$$
Y(a) = \int_{\K} e^{-F(x)a} dx
$$
With this notation, we have that
$$
\left\| \frac{\mu_i}{\mu_{i+1}} \right\| = \frac{Y(2/T_i - 1/T_{i+1}) Y(1/T_{i+1})}{Y(1/T_i)^2}
$$
Define
$$
G(x,t) = \HH\left(\frac{x}{t} \right)^t.
$$
Then it holds that
\begin{align*}
G(\lambda(x,t)+(1-\lambda)(x',t')) & = \HH\left(\frac{\lambda x+ (1-\lambda) x'}{\lambda t + (1 - \lambda)t'} \right)^{\lambda t + (1 - \lambda)t'} \\
& =  \HH\left(\frac{\lambda t}{\lambda t + (1 - \lambda)t'} \frac{x}{t} + \frac{(1-\lambda) t'}{\lambda t + (1 - \lambda)t'} \frac{x'}{t'} \right)^{\lambda t + (1 - \lambda)t'} \\
& \geq \exp(-\beta(\lambda t + (1-\lambda) t')) \HH\left( \frac{x}{t} \right)^{\lambda t} \HH\left( \frac{x'}{t'} \right)^{(1-\lambda) t'} \\
& = \exp(-\beta(\lambda t + (1-\lambda) t')) G(x,t)^\lambda G(x',t')^{(1-\lambda)}
\end{align*}
Because
$$
\int_{\K} G(x,t) dx = \int_{\K} \HH\left( \frac{x}{t} \right)^t dx  = t^n \int_{\K} \HH(x)^t dx = t^n Y(t)
$$
through Pr\'{e}kopa-Leindler inequality, we have
$$
\left(\frac{a+b}{2}\right)^{2n} Y\left( \frac{a+b}{2} \right)^2 \geq \exp(-\beta(a+b))a^n Y(a) b^n Y(b)
$$
Take $a = 2/T_i - 1/T_{i+1}$ and $b = 1/T_{i+1}$. Then we have
\begin{align*}
\left\| \frac{\mu_i}{\mu_{i+1}} \right\| &= \frac{Y(2/T_i - 1/T_{i+1}) Y(1/T_{i+1})}{Y(1/T_i)^2} \\
& \leq \left( \frac{1/T_i^2}{(2/T_i - 1/T_{i+1})(1/T_{i+1})}  \right)^n \exp(2\beta/T_i) \\
&  = \left( 1+\frac{1}{n - 2\sqrt{n}} \right)^n \exp(2\beta/T_i) \leq e^{n/(n-2\sqrt{n})} \exp(2\beta/T_i) \leq 5 \exp(2\beta/T_i)
\end{align*}
%	We set $\beta=T_K \leq T_i$, and thus $\left\| \frac{\mu_i}{\mu_{i+1}} \right\| =  \mathcal{O}(1)$.
\end{proof}

\begin{proof}[Proof of Theorem \ref{Thm:Warm}]
Let us prove \eqref{eq:keep_it_close2} by induction. Suppose at the end of epoch $i$, we have
$$
d_{\sf tv}( \hat{\sigma}^{(m)}_{i} , \pi_{\HH_i}) \leq \left(1+\frac{1}{n\log 1/\rho}\right)^i \gamma
$$
We identify $\hat{\sigma}^{(m)}_{i} = \hat{\sigma}^{(0)}_{i+1}$,  $\sigma^{(0)}_{i+1} = \pi_{\HH_{i}}$. Hence, by Step 2 in the proof of Theorem \ref{Thm:Allerrors}, we have
\begin{align*}
d_{\sf tv}( \hat{\sigma}^{(m)}_{i+1} , \pi_{\HH_{i+1}})
&\leq d_{\sf tv}( \sigma^{(m)}_{i+1} , \pi_{\HH_{i+1}}) + d_{\sf tv}( \hat{\sigma}^{(m)}_{i+1} ,\sigma^{(m)}_{i+1}) \\
&\leq d_{\sf tv}( \sigma^{(m)}_{i+1} , \pi_{\HH_{i+1}}) + m\tilde{\epsilon} + d_{\sf tv}(\hat{\sigma}^{(0)}_{i+1},\sigma^{(0)}_{i+1} ) \\
& =d_{\sf tv}( \sigma^{(m)}_{i+1} , \pi_{\HH_{i+1}}) + m\tilde{\epsilon} + 2d_{\sf tv}( \hat{\sigma}^{(m)}_{i} , \pi_{\HH_{i}})  \\
&\leq \left(1+\frac{1}{n\log 1/\rho}\right)^i  \gamma \cdot \frac{1}{4n \log 1/\rho} +\left(1+\frac{1}{n\log 1/\rho}\right)^i  \gamma \cdot \frac{1}{4n \log 1/\rho} + \left(1+\frac{1}{n\log 1/\rho}\right)^i  \gamma  \\
&\leq \left(1+ \frac{1}{n\log 1/\rho}\right)^{i+1} \gamma
\end{align*}
by choosing $m = \mathcal{O}^*(n^3)$ and $$\tilde{\epsilon} =\frac{1}{m} \cdot \left(1+\frac{1}{n\log 1/\rho}\right)^i \gamma \cdot \frac{1}{4n\log 1/\rho}.$$

Thus the final epoch, $i=\sqrt{n}\log 1/\rho$, the error is at most $\left(1+\frac{1}{n\log 1/\rho}\right)^{\sqrt{n}\log 1/\rho} \gamma \leq e \gamma$.
\end{proof}

\begin{proof}[Proof of Theorem \ref{eq:opt_guarantee}]
	In \cite{kalai2006simulated}, the authors proved the theorem for the case $f(x) = c\cdot x$ and claimed it can be extended to arbitrary convex functions. Here we give a proof for the sake of completeness. \cite{kalai2006simulated} proved above inequality with arbitrary convex set $K$. Let's see how to relate an arbitrary convex function $f(x)$ to a linear function by increasing the dimension by 1. Consider a convex set $\K \in \mathbb{R}^n$ and a continuous convex function $f: \mathbb{R}^n \rightarrow \mathbb{R}$. Consider the epigraph
	$$
	\tilde{\K}:=\{(x,y): x\in \K, y \geq f(x) \}.
	$$
Define the linear function $\tilde{f}(x,y) = c \cdot (x,y)$, where $(x,y)\in \tilde{\K}$ and $c = ({\bf 0},1) \in \mathbb{R}^{n+1}$. We have
$$
\mathbb{E} \tilde{f}(X,Y) \geq \mathbb{E} f(X) \quad \text{the first one increase mass on large values}
$$
$$
\min_{(x,y)\in \tilde{\K}} \tilde{f}(x,y) = \min_{x \in \K} f(x)
$$
Thus
$$
\E f(X) - \min_{x\in\K} f(x) \leq \E \tilde{f}(X,Y) - \min_{(x,y)\in \tilde{\K}} \tilde{f}(x,y) \leq (n+1)T
$$
proof completed.

For the second claim, it is not hard to see that
$$
\E_F f(X) - \min_{x \in \K}f(x) = \E_F \left[f(X) - \min_{x \in \K}f(x)\right].
$$
Since adding a constant to the function does not have an effect on the density, we can assume without loss of generality that $\min_{x \in K}f(x) = 0$. Thus we have
\begin{align*}
\E_F f(X) &= \frac{\int_{\K} f(x) \exp\{-F(x)/T\} dx}{\int_{\K} \exp\{-F(x)/T\} dx} \leq \frac{\int_{\K} f(x) \exp\{-f(x)/T\} dx \cdot \exp(\rho/T) }{\int_{\K} \exp\{-f(x)/T\} dx \cdot \exp(-\rho/T)} \\& \leq \exp(2\rho/T) \cdot \E f(X)  \leq (n+1)T \cdot \exp(2\rho/T)
\end{align*}
and
$$
\E_F f(X) - \min_{x\in\K} f(x) \leq (n+1)T \cdot  \exp(2\rho/T).
$$
\end{proof}	

\begin{proof}[Proof of Corollary \ref{cor:4.5}]
	We choose $\rho =\epsilon/n$ and $T_K=\rho$. Given the final temperature, $K=\sqrt{n}\log(n/\epsilon)$. The optimization guarantee follows from Theorem~\ref{eq:opt_guarantee}. The number of queries is $\mathcal{O}^*(n^3)$ for one sample in one phase (Theorem~\ref{thm:mixing_LC}) times $\mathcal{O}^*(n)$ samples per phase for rounding (Section~\ref{sec:rounding}) times $K=\mathcal{O}^*(\sqrt{n})$ phases. The resulting distribution, however, is only $e\gamma$-close to the distribution with density proportional to $\exp\{-nF/\epsilon\}$ (by Theorem~\ref{Thm:Warm}). The guarantee of Theorem~\ref{eq:opt_guarantee} holds for the latter distribution, and we need to upper bound the effect of having a sample from an almost-desired distribution. Thankfully, $\gamma$ enters logarithmically in oracle complexity. Since $f$ is $L$-Lipschitz and domain is bounded, the range of function values over $\K$ is bounded by $B=\mathcal{O}(nLR)$. Then $\gamma$ can be chosen as $\epsilon/B$, which again only impacts oracle complexity by terms logarithmic in $n, L, R$.
\end{proof}

\end{appendix}
\bibliographystyle{apalike}
\bibliography{bibfile}

\end{document}